\newtheorem{theorem}{Theorem}[section]
\newtheorem{proposition}{Proposition}[section]
\newtheorem{lemma}{Lemma}[section]
\newtheorem{definition}{Definition}[section]
\newtheorem{remark}{Remark}[section]
\numberwithin{equation}{section}
\DeclareMathOperator{\diver}{div}
\DeclareMathOperator{\curl}{curl}
\DeclareMathOperator{\curlcurl}{curl^2}
\newcommand{\de}{\delta}
\newcommand{\ve}{\varepsilon}
\newcommand{\vart}{\vartheta}
\newcommand{\BB}{\mathcal B}
\newcommand{\N}{\mathbb N}
\newcommand{\R}{\mathbb R}
\renewcommand{\C}{\mathbb C}
\newcommand{\ex}{\mathrm{ex}}
\newcommand{\loc}{\mathrm{loc}}
\newcommand{\D}{\mathrm D}
\newcommand{\GG}{\mathfrak G}
\newcommand{\RR}{\mathfrak R}
\renewcommand{\H}{\mathcal H}
\title[On the first critical field in 3D Ginzburg-Landau]{On the first critical field in the 3D Ginzburg-Landau model of superconductivity}
\subjclass[2010]{35J20,35J25,35J50,35J60,35Q56,49Q15,49Q20,53Z05,82D55}
\keywords{Ginzburg-Landau, first critical field, Meissner solution, energy minimizers}
\author{Carlos Rom\'{a}n}
\date{February 26, 2018}
\address{Mathematisches Institut\\
Universit\"{a}t Leipzig\\
Augustusplatz 10, 04109 Leipzig, Germany}
\urladdr{\href{http://www.math.uni-leipzig.de/~roman/}{www.math.uni-leipzig.de/~roman/}}
\email{\href{mailto:roman@math.uni-leipzig.de}{roman@math.uni-leipzig.de}}
\begin{document}
\begin{abstract}
The Ginzburg-Landau model is a phenomenological description of superconductivity. A crucial feature of type-II superconductors is the occurrence of vortices, which appear above a certain value of the applied magnetic field called the first critical field. In this paper we estimate this value, when the Ginzburg-Landau parameter is large, and we characterize the behavior of the Meissner solution, the unique vortexless configuration that globally minimizes the Ginzburg-Landau energy below the first critical field. In addition, we show that beyond this value, for a certain range of the strength of the applied field, there exists a unique Meissner-type solution that locally minimizes the energy.
\end{abstract}
\maketitle 


\section{Introduction}
\subsection{Problem and background}
Superconductors are certain metals and alloys, which, when cooled down below a critical (typically very low) temperature, lose their resistivity, which allows permanent currents to circulate without loss of energy. Superconductivity was discovered by Ohnes in 1911. As a phenomenological description of this phenomenon, Ginzburg and Landau \cite{GinLan} introduced in 1950 the Ginzburg-Landau model of superconductivity, which has been proven to effectively predict the behavior of superconductors and that was subsequently justified as a limit of the Bardeen-Cooper-Schrieffer (BCS) quantum theory \cite{BCS}. It is a model of great importance in physics, with Nobel prizes awarded for it to Abrikosov, Ginzburg, and Landau.

The Ginzburg-Landau functional, which models the state of a superconducting sample in an applied magnetic field, assuming that the temperature is fixed and below the critical one, is
$$
GL_\ve(u,A)=\frac12\int_\Omega |\nabla_A u|^2+\frac{1}{2\ve^2}(1-|u|^2)^2+\frac12\int_{\R^3}|H-H_{\ex}|^2.
$$
Here
\begin{itemize}
\item $\Omega$ is a bounded domain of $\R^3$, that we assume to be simply connected with $C^2$ boundary.
\item $u:\Omega\rightarrow \mathbb{C}$ is called the \emph{order parameter}. Its modulus squared (the density of Cooper pairs of superconducting electrons in the BCS quantum theory) indicates the local state of the superconductor: where $|u|^2\approx 1$ the material is in the superconducting phase, where $|u|^2\approx 0$ in the normal phase.
\item $A:\R^3\rightarrow \R^3$ is the electromagnetic vector potential of the induced magnetic field $H=\curl A$.
\item $\nabla_A$ denotes the covariant gradient $\nabla-iA$.
\item $H_{\ex}:\R^3\rightarrow \R^3$ is a given external (or applied) magnetic field.
\item $\ve>0$ is the inverse of the \emph{Ginzburg-Landau parameter} usually denoted $\kappa$, a non-dimensional parameter depending only on the material. We will be interested in the regime of small $\ve$, corresponding to extreme type-II superconductors. 
\end{itemize}

A key physical feature of this type of superconductors is the occurrence of co-dimension 2 topological singularities called \emph{vortices}, which appear above a certain critical value of the strength of the applied field $h_{\ex}\colonequals \|H_{\ex}\|_{L^2(\Omega,\R^3)}$. There are three main critical values of $h_{\ex}$ or critical fields $H_{c_1},H_{c_2}$, and $H_{c_3}$, for which phase transitions occur.
\begin{itemize}
\item Below $H_{c_1}=O(|\log \ve|)$, the superconductor is everywhere in its superconducting phase, i.e. $|u|$ is uniformly close to $1$, and the applied field is expelled by the material due to the occurrence of supercurrents near $\partial\Omega$. This phenomenon is known as the \emph{Meissner effect}.
\item At $H_{c_1}$, the first vortice(s) appear and the applied field penetrates the superconductor through the vortice(s).
\item Between $H_{c_1}$ and $H_{c_2}$, the superconducting and normal phases coexist in the sample. As $h_{\ex}$ increases, so does the number of vortices. The vortices repeal each other, while the external magnetic field confines them inside the sample. 
\item At $H_{c_2}=O\left(\frac1{\ve^2}\right)$, the superconductivity is lost in the bulk of the sample. 
\item Between $H_{c_2}$ and $H_{c_3}$, superconductivity persists only near the boundary. 
\item Above $H_{c_3}=O\left(\frac1{\ve^2}\right)$, the applied magnetic field completely penetrates the sample and the superconductivity is lost, i.e. $u=0$.
\end{itemize}
The Ginzburg-Landau model is known to be a $\mathbb U(1)$-gauge theory. This means that all the meaningful physical quantities are invariant under the gauge transformations 
$$
u\mapsto ue^{i\phi},\quad A\mapsto A+\nabla \phi,
$$
where $\phi$ is any real-valued function in $H^2_{\loc}(\R^3)$. The Ginzburg-Landau energy and its associated free energy 
$$
F_\ve(u,A)= \frac12\int_\Omega |\nabla_A u|^2+\frac{1}{2\ve^2}(1-|u|^2)^2+|\curl A|^2
$$
are gauge invariant, as well as the density of superconducting Cooper pairs $|u|^2$, the induced magnetic field $H$, and the vorticity, defined, for any sufficiently regular configuration $(u,A)$, as
$$
\mu(u,A)=\curl (iu,\nabla_A u)+\curl A,
$$
where $(\cdot,\cdot)$ denotes the scalar product in $\C$ identified with $\R^2$ i.e. $(a,b)=\frac{\overline{a}b+a\overline{b}}2$. This quantity is the gauge-invariant version of the Jacobian determinant of $u$ and is the analogue of the vorticity of a fluid. For further physics background on the model, we refer to \cites{Tin,DeG}. 

\smallskip
The main purpose of this paper is to give a precise estimate of $H_{c_1}$ and to characterize the behavior of global minimizers of $GL_\ve$ below this value in 3D. The analysis of $H_{c_2}$ or higher applied fields requires completely different techniques. The interested reader can refer to \cites{GioPhi,FouHel,FouKach,FouKachPer} and references therein. 

\smallskip
The first critical field is (rigorously) defined by the fact that below $H_{c_1}$ global minimizers of the Ginzburg-Landau functional do not have vortices, while they do for applied fields whose strength is higher than $H_{c_1}$. In the 2D setting, Sandier and Serfaty (see \cites{Ser,SanSer1,SanSer2,SanSerBook}) provided an expansion of the first critical field, up to an error $o(1)$ as $\ve\to 0$, and rigorously characterized the behavior of global minimizers of the Ginzburg-Landau functional below and near this value. Conversely, in 3D much less is known. Very recently Baldo, Jerrard, Orlandi, and Soner \cite{BalJerOrlSon2}, via a $\Gamma$-convergence argument, provided the asymptotic leading order value of the first critical field as $\ve\to 0$ (see also \cite{BalJerOrlSon1} for related results). In short, in a uniform applied field, they proved that if $(u_\ve,A_\ve)$ minimizes $GL_\ve(u_\ve,A_\ve)$ then there exists a measure $\mu_0$ such that
$$
\frac{\mu(u_\ve,A_\ve)}{|\log \ve|}\to \mu_0 \quad \mathrm{as}\ \ve\to 0
$$
in weak sense (the precise type of convergence can be found in \cite{BalJerOrlSon2}*{Proposition 1}). Moreover, there exists a constant $H^*$ such that if $\lim_{\ve\to0}\frac{h_{\ex}}{|\log \ve|}<H^*$ then $\mu_0\equiv 0$, while $\mu_0\not\equiv 0$ if $\liminf_{\ve\to0}\frac{h_{\ex}}{|\log \ve|}>H^*$. This result
gives $H_{c_1}$ up to an error $o(|\log \ve|)$ as $\ve \to 0$ and agrees with previous work by Alama, Bronsard, and Montero \cite{AlaBroMon} in the special when $\Omega$ is a ball. An intermediate situation, when the superconducting sample is a thin shell, was treated in \cite{Con}.

\smallskip
Before stating our results, let us recall the three-dimensional $\ve$-level estimates for the Ginzburg-Landau functional provided by the author in \cite{Rom}. These tools will play a crucial role in this paper.
\begin{theorem}\label{theorem:epslevel} For any $m,n,M>0$ there exist $C,\ve_0>0$ depending only on $m,n,M,$ and $\partial\Omega$, such that, for any $\ve<\ve_0$, if $(u_\ve,A_\ve)\in H^1(\Omega,\C)\times H^1(\Omega,\R^3)$ is a configuration such that $F_\ve(u_\ve,A_\ve)\leq M|\log\ve|^m$ then there exists a polyhedral $1$-dimensional current $\nu_\ve$ such that
\begin{enumerate}[leftmargin=*,font=\normalfont]
\item $\nu_\ve /\pi$ is integer multiplicity,
\item $\partial \nu_\ve=0$ relative to $\Omega$,
\item $\mathrm{supp}(\nu_\ve)\subset S_{\nu_\ve}\subset \overline \Omega$ with $|S_{\nu_\ve}|\leq C|\log\ve|^{-q}$, where $q(m,n)\colonequals\frac32 (m+n)$,
\smallskip
\item 
$\displaystyle 
\int_{S_{\nu_\ve}}|\nabla_{A_\ve} u_\ve|^2+\frac{1}{2\ve^2}(1-|u_\ve|^2)^2+|\curl A_\ve|^2 \geq |\nu_\ve|(\Omega)\left(\log \frac1\ve-C \log \log \frac1\ve\right)-\frac{C}{|\log\ve|^n},
$
\smallskip
\item and for any $\gamma\in(0,1]$ there exists a constant $C_\gamma$ depending only on $\gamma$ and $\partial\Omega$, such that 
$$
\|\mu(u_\ve,A_\ve) -\nu_\ve\|_{C_T^{0,\gamma}(\Omega)^*}\leq C_\gamma \frac{F_\ve(u_\ve,A_\ve)+1}{|\log \ve|^{q\gamma}}.
$$
\end{enumerate}
\end{theorem}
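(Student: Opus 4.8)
The plan is to reduce the three-dimensional statement to a family of planar problems by slicing, and then to reassemble the planar vortices into a single polyhedral curve. I would take as given the classical two-dimensional machinery: the vortex ball constructions of Sandier and of Jerrard, the refinement of Sandier--Serfaty giving a lower bound with only a $\log\log\frac1\ve$ loss per unit of degree, and the Jacobian estimate of Jerrard--Soner. Concretely, for a planar configuration $(v,B)$ on a disk $D$ whose free energy is bounded by a power of $|\log\ve|$, these produce a finite family of disjoint disks of arbitrarily small total radius, covering $\{|v|\le\tfrac12\}$, carrying degrees $d_i\in\mathbb Z$, on which the energy is at least $\pi\big(\sum_i|d_i|\big)\big(\log\frac1\ve-C\log\log\frac1\ve\big)$, and such that $\mu_{2\mathrm{D}}(v,B)$ is within $C(\mathrm{energy}+1)|\log\ve|^{-\gamma}$ of $\pi\sum_i d_i\delta_{a_i}$ in the dual Hölder norm. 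Since $|\nabla_B v|\ge|\nabla|v||$ and the covariant structure is compatible with restriction to planes, all of this survives slicing a $3$D configuration of bounded $F_\ve$.

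First I would fix a cubic grid $\RR_\ell$ of sidelength $\ell=\ell(\ve)$, a small negative power of $|\log\ve|$ to be tuned at the very end, covering a neighborhood of $\overline\Omega$. Because $F_\ve(u_\ve,A_\ve)\le M|\log\ve|^m$, a mean-value argument (Fubini over translations of the grid together with the coarea formula) selects a position of $\RR_\ell$ whose $2$-skeleton $\Sigma$ carries controlled sliced energy, the sum over faces $f\subset\Sigma$ of the planar free energy of $(u_\ve,A_\ve)|_f$ being $\lesssim\ell^{-1}M|\log\ve|^m$, and on whose faces $|u_\ve|\ge\tfrac12$ on a definite proportion of the area. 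Running the planar ball construction on each such face $f$ gives disjoint disks in $f$ of total radius $\le\ell/10$ and degrees $d_i^f$. Inside each cube $Q$ of the grid I then connect, through the interior of $Q$, the disks sitting on $\partial Q$ by straight segments, matched so that the resulting $1$-current has no boundary in the interior of $Q$; this matching is possible because the finiteness of $F_\ve(u_\ve,A_\ve;Q)$ together with the smallness of the disks forces the net degree crossing $\partial Q$ to vanish, and the pairing is carried out by a minimal-connection bookkeeping. Gluing over all cubes gives a polyhedral current $\nu_\ve$ with $\nu_\ve/\pi$ of integer multiplicity and $\partial\nu_\ve=0$ relative to $\Omega$, which is (1)--(2). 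For $S_{\nu_\ve}$ I take the union of the grid cubes met by a thin neighborhood of $\nu_\ve$; its volume is at most (number of vortex crossings)$\,\times\,\ell^{3}$, and since each crossing carries $\gtrsim|\log\ve|$ of energy their number is $\lesssim\ell^{-1}|\log\ve|^{m-1}$, so a choice $\ell\sim|\log\ve|^{-\alpha}$ with $\alpha$ determined by $m$ and $n$ yields $|S_{\nu_\ve}|\le C|\log\ve|^{-q}$, $q=\frac32(m+n)$; this is (3).

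For the lower bound (4) I would slice $S_{\nu_\ve}$ by the families of planes of the grid: on almost every slice the planar lower bound gives energy at least $\pi\big(\sum_i|d_i|\big)\big(\log\frac1\ve-C\log\log\frac1\ve\big)$ on the disks met, and integrating in the transverse variable by Fubini recovers $|\nu_\ve|(\Omega)\big(\log\frac1\ve-C\log\log\frac1\ve\big)$ up to an error $\le C|\log\ve|^{-n}$ coming from the bad faces and the parts of the bad set not captured; the segments of $\nu_\ve$ that are not axis-parallel are split into finitely many pieces, each transverse to a coordinate direction, and the direction-cosine factors that appear only improve the estimate. For the Jacobian estimate (5) I would test $\mu(u_\ve,A_\ve)-\nu_\ve$ against a vector field $\zeta\in C^{0,\gamma}_T(\Omega)$: writing $\mu(u_\ve,A_\ve)=\curl\big((iu_\ve,\nabla_{A_\ve}u_\ve)+A_\ve\big)$, integrating by parts, and applying the planar Jacobian estimate on each slice bounds the pairing of $\zeta$ with $\mu(u_\ve,A_\ve)$ minus the sliced sum of Diracs by $C_\gamma\big(F_\ve(u_\ve,A_\ve)+1\big)|\log\ve|^{-q\gamma}$, while the discrepancy between the sliced Diracs and $\nu_\ve$ itself is $O(\ell^{\gamma}|\log\ve|^{m-1})$ in the same dual norm because the connecting segments have length $\lesssim\ell$; summing over slices and the three coordinate directions gives (5) after adjusting constants.

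The main obstacle is the simultaneous calibration of $\ell(\ve)$: it must be large enough that the curve $\nu_\ve$ has controlled mass, so the lower bound is not wasteful and at most $C\log\log\frac1\ve$ of energy per unit length is lost, yet small enough that $|S_{\nu_\ve}|$ is polynomially small with the exact exponent $q=\frac32(m+n)$ and that the Jacobian error beats $|\log\ve|^{-q\gamma}$ for all $\gamma\in(0,1]$; one also has to make the grid-averaging yield faces that are simultaneously good in all three coordinate directions, and ensure the current closes up exactly rather than approximately. Tracking how each error term scales with $\ell$ and with the energy budget $M|\log\ve|^m$, and checking that a single power of $|\log\ve|$ satisfies all constraints at once, is where the genuine work lies; the remaining ingredients are an orchestration of the known planar estimates.
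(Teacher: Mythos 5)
This theorem is not proved in the present paper at all: it is quoted verbatim from the author's earlier work \cite{Rom} and used here as a black box, so there is no in-paper proof to compare against. That said, your outline — an averaged cubic grid, 2D ball constructions run on the faces of its 2-skeleton, a zero-net-degree/minimal-connection gluing inside each cube to produce the polyhedral current, and slicing plus the planar Jacobian estimate for items (4) and (5) — is essentially the strategy of \cite{Rom}, as one can read off from the fragments of that construction invoked in this paper (the grid $\GG(b_\ve,R_0,\de)$ of \cite{Rom}*{Lemma 2.1}, the ball construction on the skeleton $\RR_2(\GG(b_\ve,R_0,\de))$, the decomposition $\nu_\ve=\sum_i 2\pi\Gamma_i$ of \cite{Rom}*{Section 5.2}, and the vorticity bound of \cite{Rom}*{Section 8}). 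The one step your sketch treats as automatic that genuinely is not is the passage from the 2-skeleton data to the interior of each cube: the lower bound (4) must be localized to a set $S_{\nu_\ve}$ of volume $|\log\ve|^{-q}$ while simultaneously capturing $|\nu_\ve|(\Omega)\log\frac1\ve$ worth of energy, and your Fubini argument over parallel slices requires showing that the degrees seen on almost every interior slice of a cube agree, up to controlled errors, with the degrees found on its faces; reconciling these two pictures (and calibrating the single grid scale against all of $q$, the $\log\log$ loss, and the exponent $q\gamma$ in (5)) is where the bulk of the work in \cite{Rom} actually lies, and a plan that defers it has not yet engaged the hard part.
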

Here and in the rest of the paper, $C_T^{0,\gamma}(\Omega)$ denotes the space of vector fields $\Phi \in C^{0,\gamma}(\Omega)$ such that $\Phi\times\nu=0$ on $\partial \Omega$, where $\nu$ is the outer unit normal to $\partial\Omega$. The symbol $^*$ denotes its dual space.

\subsection{Main results}
Throughout this article we assume that $H_{\ex}\in L_{\loc}^2(\R^3,\R^3)$ is such that $\diver H_{\ex}=0$ in $\R^3$. In particular, we deduce that there exists a vector-potential $A_{\ex}\in H^1_{\loc}(\R^3,\R^3)$ such that 
$$
\curl A_{\ex}=H_{\ex}, \ \diver A_{\ex}=0\ \mathrm{in}\ \R^3\quad \mathrm{and}\quad A_{\ex}\cdot \nu=0\ \mathrm{on}\ \partial\Omega.
$$
Let us define $H_{0,\ex}=h_{\ex}^{-1}H_{\ex}$ and assume that this vector field is H\"older continuous in $\Omega$ with H\"older exponent $\beta\in (0,1]$ and H\"older norm bounded independently of $\ve$. In particular, note that $\|H_{0,\ex}\|_{L^2(\Omega,\R^3)}=1$. It is also convenient to set $A_{0,\ex}=h_{\ex}^{-1}A_{\ex}$.

We remark that the divergence-free assumption on the applied magnetic field is in accordance with the fact that magnetic monopoles do not exist in Maxwell's electromagnetism theory.

\smallskip 
The natural space for the minimization of $GL_\ve$ in 3D is $H^1(\Omega,\C)\times [A_{\ex}+H_{\curl}]$, where
$$
H_{\curl}\colonequals \{A\in H^1_{\loc}(\R^3,\R^3) \ | \ \curl A\in L^2(\R^3,\R^3)\}.
$$
Let us also introduce the homogeneous Sobolev space $\dot H^1(\R^3,\R^3)$, which is defined as the completion of $C_0^\infty (\R^3,\R^3)$ with respect to the norm $\|\nabla (\, \cdot \, ) \|_{L^2(\R^3,\R^3)}$. We observe that, by Sobolev embedding, there exists a constant $C>0$ such that 
\begin{equation}\label{space1}
\|A\|_{L^6(\R^3,\R^3)}\leq C \|\nabla A\|_{L^2(\R^3,\R^3)}
\end{equation}
for any $A\in \dot H^1(\R^3,\R^3)$. Moreover, by \cite{KozSoh}*{Proposition 2.4}, we have 
$$
\dot H^1(\R^3,\R^3)=\{A\in L^6(\R^3,\R^3)\ | \ \nabla A \in L^2(\R^3,\R^3)\}.
$$
It is also convenient to define the subspace 
$$
\dot H^1_{\diver=0}\colonequals \{A\in \dot H^1(\R^3,\R^3) \ | \ \diver A=0\ \mathrm{in}\ \R^3\}.
$$
In this subspace, one has
\begin{equation}\label{space2}
\|A\|_{\dot H^1_{\diver=0}}\colonequals \|\nabla A\|_{L^2(\R^3,\R^3)}= \|\curl A\|_{L^2(\R^3,\R^3)}.
\end{equation}

\medskip
Let us now define a special vortexless configuration that turns out to be a good approximation of the so-called \emph{Meissner solution}, i.e. the vortexless global minimizer of the Ginzburg-Landau energy below the first critical field, which, as we shall see, is unique up to a gauge transformation. By recalling that any vector field $A\in H^1(\Omega,\R^3)$ can be decomposed as (see Lemma \ref{lemma:decompositionOmega})
$$
\left\lbrace 
\begin{array}{rcll}
A&=&\curl B_A + \nabla \phi_A&\mathrm{in}\ \Omega\\
B_A\times \nu&=&0&\mathrm{on}\ \partial\Omega\\
\nabla\phi_A\cdot \nu&=&A\cdot \nu &\mathrm{on}\ \partial\Omega,
\end{array}\right.
$$
we consider the pair $(u_0,h_{\ex}A_0)$, where $u_0=e^{ih_{\ex}\phi_{A_0}}$ and $A_0$ is the unique minimizer (in a suitable space) of the functional 
$$
J(A)=\frac12\int_\Omega|\curl B_A|^2+\frac12\int_{\R^3}|\curl(A-A_{0,ex})|^2.
$$
This special configuration satisfies the following properties:
\begin{itemize}
\item $GL_\ve(u_0,h_{\ex}A_0)=h_{\ex}^2J(A_0)$ and $|u_0|=1$ in $\Omega$.
\item $H_0=\curl A_0$ satisfies the usually called \emph{London equation}
$$
\curlcurl(H_0-H_{0,\ex})+H_0\chi_\Omega=0\quad \mathrm{in}\  \R^3,
$$
where $\chi_\Omega$ denotes the characteristic function of $\Omega$.
\item The divergence-free vector field $B_0=B_{A_0}\in C_T^{2,\beta}(\Omega,\R^3)$ satisfies 
$$
\left\{
\begin{array}{rcll}
-\Delta B_0+B_0&=&H_{0,\ex}&\mathrm{in}\ \Omega\\
B_0\times \nu&=&0&\mathrm{on}\ \partial\Omega.
\end{array}\right.
$$
This vector field is the analog of the function $\xi_0$, considered by Sandier and Serfaty in the analysis of the first critical field in 2D (see \cites{Ser,Ser2,SanSer1,SanSer2}). We shall see that $B_0$ plays an important role in our 3D analysis.
\end{itemize}
In addition, this pair allows us to split the Ginzburg-Landau energy of a given configuration $(u,A)$. More precisely, by writing $u'=u_0^{-1}u$ and $A'=A-h_{\ex}A_0$, one can prove that (see Proposition \ref{prop:energysplitting})
$$
GL_\ve(u,A)=h_{\ex}^2J(A_0)+F_\ve(u',A')+\frac12\int_{\R^3\setminus \Omega}|\curl A'|^2-h_{\ex}\int_\Omega \mu(u',A')\wedge B_0+R_0,
$$
where $R_0=o(1)$, in particular, when $h_{\ex}$ is bounded above by a negative power of $|\log\ve|$. Let me emphasize that one of the achievement of this paper is to find the right pair $(u_0,h_{\ex}A_0)$ to split the energy, which then allows to implement (almost) the same strategies as in 2D.

By combining this splitting with the optimal $\ve$-level estimates of Theorem \ref{theorem:epslevel}, we find
\begin{multline*}
GL_\ve(u,A)\geq h_{\ex}^2J(A_0)+\frac12|\nu_\ve'|(\Omega)\left(\log\frac1\ve -C\log \log \frac1\ve\right)\\+\frac12\int_{\R^3\setminus \Omega}|\curl A'|^2-h_{\ex}\int_\Omega \nu_\ve'\wedge B_0+o(1),
\end{multline*}
where $\nu_\ve'$ denotes the $1$-current associated to $(u',A')$ by Theorem \ref{theorem:epslevel}. By construction of $\nu_\ve'$ (see \cite{Rom}*{Section 5.2}), we can write
$$
\nu_\ve'=\sum_{i\in I_\ve}2\pi \Gamma_i^\ve,
$$
where the sum is understood in the sense of currents, $I_\ve$ is a finite set of indices, and $\Gamma_i^\ve$ is an oriented Lipschitz curve in $\Omega$ with multiplicity $1$. Each of these curves, which are non-necessarily distinct, does not self intersect and is either a loop contained in $\Omega$ or has two different endpoints on $\partial \Omega$. We will denote by $X$ the class of Lipschitz curves, seen as $1$-currents, described here.

Inserting this expression in the previous inequality, allows us to heuristically derive the leading order of the first critical field:
$$
H_{c_1}^0\colonequals \frac1{2\|B_0\|_*}|\log \ve|,
$$
where
\begin{equation}\label{B_0*}
\|B_0\|_*\colonequals \sup\limits_{\Gamma \in X}\frac1{|\Gamma|(\Omega)}\int_\Omega \Gamma\wedge B_0\footnote{The notation used here is explained in the preliminaries (see Section \ref{sec:preliminaries}).}.
\end{equation}
In Proposition \ref{B_0ball}, we compute this value in a special case. 

\smallskip
We may now state our first result, that characterizes the behavior of global minimizers of $GL_\ve$ below $H_{c_1}^0$. In the 2D setting, an analogous result was proved by Sandier and Serfaty (see \cite{SanSer1}*{Theorem 1}).
\begin{theorem}\label{theorem:belowHc1}
There exist constants $\ve_0,K_0>0$ such that for any $\ve<\ve_0$ and $h_{\ex}\leq H_{c_1}^0-K_0\log |\log\ve|$, the global minimizers $(u_\ve,A_\ve)$ of $GL_\ve$ in $H^1(\Omega,\C)\times [A_{\ex}+H_{\curl}]$ are vortexless configurations that satisfy
$$
h_{\ex}^2J(A_0)+o(1)\leq GL_\ve(u_\ve,A_\ve)\leq h_{\ex}^2J(A_0)\quad \mathrm{and}\quad \|1-|u_\ve|\|_{L^\infty(\Omega,\C)}=o(1).
$$
\end{theorem}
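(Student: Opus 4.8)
The strategy mirrors Sandier and Serfaty's two-dimensional argument below $H_{c_1}$: use the energy splitting of Proposition~\ref{prop:energysplitting} together with the $\ve$-level estimates of Theorem~\ref{theorem:epslevel} to show that a global minimizer cannot carry vorticity, and then exploit the Ginzburg--Landau equations to convert the resulting smallness of the free energy into the uniform bound on $|u_\ve|$. First, testing $GL_\ve$ against the admissible pair $(u_0,h_{\ex}A_0)$ and using $GL_\ve(u_0,h_{\ex}A_0)=h_{\ex}^2J(A_0)$ gives $GL_\ve(u_\ve,A_\ve)\le h_{\ex}^2J(A_0)\le C|\log\ve|^2$, the last bound because $h_{\ex}\le H_{c_1}^0$ and $J(A_0)$ depends only on $\Omega$ and $H_{0,\ex}$. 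Writing $u_\ve'=u_0^{-1}u_\ve$ and $A_\ve'=A_\ve-h_{\ex}A_0$, a short computation using $|u_0|=1$ and $A_0=\curl B_0+\nabla\phi_{A_0}$ shows that $\nabla_{A_\ve'}u_\ve'=u_0^{-1}\nabla_{A_\ve}u_\ve+ih_{\ex}\curl B_0\,u_\ve'$ and that $\curl A_\ve'=\curl A_\ve-h_{\ex}H_0$; combined with the energy bound this yields $F_\ve(u_\ve',A_\ve')+\tfrac12\int_{\R^3\setminus\Omega}|\curl A_\ve'|^2\le C|\log\ve|^2$. In particular $F_\ve(u_\ve',A_\ve')\le M|\log\ve|^2$ with $M$ independent of $\ve$, so Theorem~\ref{theorem:epslevel} applies to $(u_\ve',A_\ve')$ with $m=2$; we fix $\gamma=1$ and take $n$ (hence $q=3+\tfrac32 n$) so large that every error term there is $\le|\log\ve|^{-\alpha}$, for a large exponent $\alpha>1$ to be fixed at the end.

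Next, apply Proposition~\ref{prop:energysplitting} to $(u_\ve,A_\ve)$ and insert the estimates of Theorem~\ref{theorem:epslevel} for $(u_\ve',A_\ve')$ and its current $\nu_\ve'$. Since $B_0\in C_T^{2,\beta}(\Omega,\R^3)\subset C_T^{0,1}(\Omega,\R^3)$, item~(5) gives $\big|\int_\Omega(\mu(u_\ve',A_\ve')-\nu_\ve')\wedge B_0\big|\le C|\log\ve|^{-\alpha}$, and the a priori bounds make $R_0=o(1)$ (indeed $O(|\log\ve|^{-\alpha})$) in this regime. Writing $\nu_\ve'=\sum_{i\in I_\ve}2\pi\,\Gamma_i^\ve$ with $\Gamma_i^\ve\in X$ and using that $X$ is closed under orientation reversal, the definition~\eqref{B_0*} gives $\big|\int_\Omega\nu_\ve'\wedge B_0\big|\le\|B_0\|_*\,|\nu_\ve'|(\Omega)$; together with item~(4) this produces
\begin{multline*}
GL_\ve(u_\ve,A_\ve)\ge h_{\ex}^2J(A_0)+\tfrac12|\nu_\ve'|(\Omega)\big(|\log\ve|-C\log|\log\ve|-2h_{\ex}\|B_0\|_*\big)\\
+\tfrac12\int_{\R^3\setminus\Omega}|\curl A_\ve'|^2-C|\log\ve|^{-\alpha}.
\end{multline*}
Comparing with the upper bound and using $h_{\ex}\le H_{c_1}^0-K_0\log|\log\ve|=\tfrac{1}{2\|B_0\|_*}|\log\ve|-K_0\log|\log\ve|$, the coefficient of $|\nu_\ve'|(\Omega)$ is $\ge(2K_0\|B_0\|_*-C)\log|\log\ve|\ge\log|\log\ve|$, provided $K_0$ is chosen (after the constants of the previous step are fixed) with $K_0\|B_0\|_*\ge\tfrac{C+1}{2}$. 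Hence $|\nu_\ve'|(\Omega)$ and $\int_{\R^3\setminus\Omega}|\curl A_\ve'|^2$ are $O(|\log\ve|^{-\alpha})$; re-inserting into the splitting (and using $h_{\ex}\le C|\log\ve|$) gives $0\le F_\ve(u_\ve',A_\ve')\le C|\log\ve|^{1-\alpha}=o(1)$, and feeding this back once more yields $h_{\ex}^2J(A_0)-o(1)\le GL_\ve(u_\ve,A_\ve)\le h_{\ex}^2J(A_0)$, which is the claimed energy estimate.

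It remains to prove $\|1-|u_\ve|\|_{L^\infty(\Omega)}=o(1)$; in particular this will give that the minimizer is vortexless. Set $\tilde A_\ve:=h_{\ex}\curl B_0+A_\ve'$, so that $u_\ve'$ solves the Ginzburg--Landau equation with potential $\tilde A_\ve$ and $\nabla_{\tilde A_\ve}u_\ve'=\nabla_{A_\ve'}u_\ve'-ih_{\ex}\curl B_0\,u_\ve'$, whence $|\nabla_{\tilde A_\ve}u_\ve'|^2\le 2|\nabla_{A_\ve'}u_\ve'|^2+Ch_{\ex}^2|u_\ve'|^2$. Then on a ball of radius $\ell\sim|\log\ve|^{-1/2}$ (with a small constant) the order-parameter energy $\int_{B_\ell}\big(|\nabla_{\tilde A_\ve}u_\ve'|^2+\ve^{-2}(1-|u_\ve'|^2)^2\big)\le o(1)+Ch_{\ex}^2\ell^3$ stays below the $\eta$-ellipticity threshold $\eta_0\ell|\log\ve|$, so the clearing-out lemma (interior version, and a boundary version using $\partial\Omega\in C^2$) gives $|u_\ve|=|u_\ve'|\ge\tfrac12$ on $\overline\Omega$. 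With $\rho_\ve:=|u_\ve|\in[\tfrac12,1]$ (the upper bound by the maximum principle), the Ginzburg--Landau equations and the elliptic estimates for the Meissner regime — relying on $\rho_\ve\ge\tfrac12$ and on $\|\curl A_\ve\|_{L^\infty(\Omega)}\le C h_{\ex}$, which follows from $\curl A_\ve-h_{\ex}H_0=\curl A_\ve'\to0$ — yield $\|\nabla_{A_\ve}u_\ve\|_{L^\infty(\Omega)}\le C h_{\ex}$. Finally $v_\ve:=1-\rho_\ve^2\ge0$ satisfies $-\Delta v_\ve+2\ve^{-2}\rho_\ve^2 v_\ve=2|\nabla_{A_\ve}u_\ve|^2\le C h_{\ex}^2$ in $\Omega$ with $\partial_\nu v_\ve=0$ on $\partial\Omega$, and since $\rho_\ve^2\ge\tfrac14$, comparison with the constant supersolution $C\ve^2h_{\ex}^2$ gives $\|1-|u_\ve|\|_{L^\infty(\Omega)}\le\|v_\ve\|_{L^\infty(\Omega)}\le C\ve^2h_{\ex}^2\le C\ve^2|\log\ve|^2=o(1)$.

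The genuinely delicate part, granting Proposition~\ref{prop:energysplitting} and Theorem~\ref{theorem:epslevel}, is this last step: a pure energy comparison does not forbid a vortex filament of length $\sim\ve$, so one must first extract $|u_\ve|\ge\tfrac12$ from the clearing-out lemma — which requires choosing the localization scale $\ell\sim|\log\ve|^{-1/2}$ carefully so that the Meissner-scale contribution $h_{\ex}^2\ell^3$ stays below $\eta_0\ell|\log\ve|$ — and only then can one invoke the elliptic bound $\|\nabla_{A_\ve}u_\ve\|_{L^\infty}\le C h_{\ex}$, whose proof rests on that lower bound. A secondary but essential issue is the bookkeeping of rates in the first two steps: $n$ in Theorem~\ref{theorem:epslevel} and then $K_0$ must be chosen so that every error is a fixed negative power of $|\log\ve|$, and it is precisely this that forces the $-K_0\log|\log\ve|$ margin below $H_{c_1}^0$, which has to absorb the $\log|\log\ve|$ loss in item~(4) of Theorem~\ref{theorem:epslevel}.
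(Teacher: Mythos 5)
Your first two steps coincide with the paper's argument essentially line by line: the upper bound $GL_\ve(u_\ve,A_\ve)\le h_{\ex}^2J(A_0)$ by testing with $(u_0,h_{\ex}A_0)$, the a priori bound $F_\ve(u_\ve',A_\ve')\le M|\log\ve|^2$ via gauge invariance, the application of Theorem~\ref{theorem:epslevel} with $n$ large, the estimate $\int_\Omega\nu_\ve'\wedge B_0\le\|B_0\|_*\,|\nu_\ve'|(\Omega)$, and the choice of $K_0$ so that the $K_0\log|\log\ve|$ margin dominates the $\log\log\frac1\ve$ loss in item (4), forcing $|\nu_\ve'|(\Omega)$ and then $F_\ve(u_\ve',A_\ve')+\frac12\int_{\R^3\setminus\Omega}|\curl A_\ve'|^2$ to be $o(1)$ and yielding the two-sided energy estimate. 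This part is correct and is the same route as the paper.

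The last step has a genuine gap. The paper obtains $\|1-|u_\ve|\|_{L^\infty(\Omega)}=o(1)$ by first putting $(u_\ve',A_\ve')$ in the Coulomb gauge in $\Omega$ (so that the standard elliptic bounds on the coefficients of the rescaled equation hold) and then invoking Chiron's clearing-out theorem, which converts the global smallness $\tilde E_\ve(v_\ve)=o(|\log\ve|^{-1})$ directly into the uniform closeness of $|u_\ve|$ to $1$. Your local $\eta$-ellipticity step giving $|u_\ve|\ge\frac12$ is of the same nature (though you skip the Coulomb-gauge normalization that such results require), but the subsequent bootstrap is not justified: $\|\curl A_\ve\|_{L^\infty(\Omega)}\le Ch_{\ex}$ does not follow from $\|\curl A_\ve'\|_{L^2(\R^3)}\to0$, and $\|\nabla_{A_\ve}u_\ve\|_{L^\infty(\Omega)}\le Ch_{\ex}$ is far stronger than the standard $C\ve^{-1}$ elliptic bound --- note that even the paper's Appendix, under the much stronger hypothesis $F_\ve(u',A')\le\ve^{1+\de}$, only reaches $\|\tilde A_j\|_{L^\infty(\Omega,\R^3)}=o(\ve^{-1})$, while here you only have $F_\ve(u_\ve',A_\ve')=o(|\log\ve|^{-1})$. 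Hence the comparison argument producing $1-|u_\ve|\le C\ve^2h_{\ex}^2$ is not established. Since the theorem only asserts $o(1)$, you can repair this by using the clearing-out result at full strength (a threshold $\eta(\sigma)$ for each target closeness $\sigma$, or directly Chiron's theorem after the gauge change, as the paper does) and dropping the bootstrap; as written, the quantitative final bound is unsupported.
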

It is important to mention that in the proof of this theorem we use the fact that solutions of the Ginzburg-Landau equations (see Section \ref{Section:GLeq}), in the Coulomb gauge, satisfy a clearing-out result proved by Chiron \cite{Chi}. Roughly speaking, this states that if the energy of a solution in a ball (with center in $\overline \Omega$) intersected with $\Omega$ is sufficiently small, then $|u|$ is uniformly away from $0$ in a ball of half radius intersected with $\Omega$. The proof given by Chiron relies on monotonicity formulas, and is very much inspired by previous work by Bethuel, Orlandi, and Smets \cite{BetOrlSme}. The interested reader can refer to \cites{Riv,LinRiv1,LinRiv2,BetBreOrl,SanSha} for results in the same spirit.

\medskip
Our second result provides bounds from above and below for the first critical field in 3D. 
\begin{theorem}\label{theorem:estimateHc1}
There exist constants $\ve_0,K_0,K_1>0$ such that for any $\ve<\ve_0$ we have 
$$
H_{c_1}^0-K_0\log |\log \ve|\leq H_{c_1}\leq H_{c_1}^0+K_1.
$$
\end{theorem}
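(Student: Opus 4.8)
The plan is to prove the two inequalities separately. The lower bound is immediate from Theorem~\ref{theorem:belowHc1}: for $\ve<\ve_0$ and $h_{\ex}\le H_{c_1}^0-K_0\log|\log\ve|$ the global minimizers of $GL_\ve$ are vortexless, so by the very definition of the first critical field $H_{c_1}\ge H_{c_1}^0-K_0\log|\log\ve|$.

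For the upper bound I would fix $h_{\ex}\ge H_{c_1}^0+K_1$, with $K_1$ a constant to be chosen, and exhibit a competitor $(u_\ve,A_\ve)$ with $GL_\ve(u_\ve,A_\ve)<h_{\ex}^2J(A_0)$ by a definite amount. This is enough: the Meissner pair $(u_0,h_{\ex}A_0)$ gives $\min GL_\ve\le h_{\ex}^2J(A_0)$, while, exactly by the argument in the proof of Theorem~\ref{theorem:belowHc1} (the energy splitting of Proposition~\ref{prop:energysplitting} together with item~(5) of Theorem~\ref{theorem:epslevel}, after choosing $n$ large enough that $h_{\ex}|\log\ve|^{-q\gamma}=o(1)$), any vortexless configuration of energy $\le h_{\ex}^2J(A_0)$ in fact has energy $\ge h_{\ex}^2J(A_0)-o(1)$; hence a competitor strictly below $h_{\ex}^2J(A_0)-o(1)$ forces every global minimizer to carry a nontrivial vorticity current, that is, $H_{c_1}\le H_{c_1}^0+K_1$. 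To build the competitor I choose a curve $\Gamma_0\in X$ attaining the supremum defining $\|B_0\|_*$ in~\eqref{B_0*} (one shows this supremum is attained, or at least approximated with defect $o(1/|\log\ve|)$ by a curve whose length and geometry are controlled uniformly in $\ve$; a near-maximizer automatically has $L\colonequals|\Gamma_0|(\Omega)$ bounded below by a positive constant), and insert a degree-one vortex line along $\Gamma_0$ by the standard recovery-sequence construction, setting $u_\ve=u_0\,u'$ and $A_\ve=h_{\ex}A_0+A'$ with $(u',A')$ chosen (for instance with $A'=0$) so that $F_\ve(u',A')\le\pi L|\log\ve|+C$, $\int_{\R^3\setminus\Omega}|\curl A'|^2\le C$, and $\|\mu(u',A')-2\pi\Gamma_0\|_{C_T^{0,\gamma}(\Omega)^*}=o(1/|\log\ve|)$.

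Feeding this into Proposition~\ref{prop:energysplitting}, and using $R_0=o(1)$, $h_{\ex}\bigl|\int_\Omega(\mu(u',A')-2\pi\Gamma_0)\wedge B_0\bigr|=o(1)$, $\int_\Omega\Gamma_0\wedge B_0=L\|B_0\|_*$, and $\tfrac12|\log\ve|=H_{c_1}^0\|B_0\|_*$, one obtains
\begin{align*}
GL_\ve(u_\ve,A_\ve)&\le h_{\ex}^2J(A_0)+\pi L|\log\ve|-2\pi h_{\ex}L\|B_0\|_*+C+o(1)\\
&=h_{\ex}^2J(A_0)+2\pi L\|B_0\|_*\bigl(H_{c_1}^0-h_{\ex}\bigr)+C+o(1)\\
&\le h_{\ex}^2J(A_0)-2\pi L\|B_0\|_*K_1+C+o(1).
\end{align*}
Since $L$ is bounded below and $\|B_0\|_*>0$, taking $K_1$ to be a large enough constant (depending only on $\Omega$ and $H_{0,\ex}$, through $L$, $\|B_0\|_*$ and the construction constant $C$) makes the right-hand side strictly smaller than $h_{\ex}^2J(A_0)-o(1)$; since moreover the bracket $H_{c_1}^0-h_{\ex}$ only decreases as $h_{\ex}$ grows, the same competitor works for every $h_{\ex}\ge H_{c_1}^0+K_1$ with $h_{\ex}=O(|\log\ve|)$, which is all that is needed to conclude $H_{c_1}\le H_{c_1}^0+K_1$.

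The step I expect to be the main obstacle is the vortex-insertion construction behind the competitor: one must produce, on all of $\R^3$, a configuration carrying the prescribed vortex line $\Gamma_0$ whose free energy overshoots the optimal value $\pi L|\log\ve|$ by only $O(1)$ (rather than $o(|\log\ve|)$) and whose vorticity agrees with $2\pi\Gamma_0$ up to an error that is $o(1/|\log\ve|)$ in the H\"older dual norm; the delicate points are the matching of $u'$ with the Meissner phase $u_0$ away from a tube of radius $\sim\ve$ around $\Gamma_0$, and, when $\Gamma_0$ has its endpoints on $\partial\Omega$, the local behavior near the boundary. A secondary difficulty is to establish that the supremum in~\eqref{B_0*} is attained, or at least approximated quantitatively well enough by a uniformly controlled curve; I would handle this by a compactness and truncation argument over curves in $X$ of bounded length, relying on the H\"older continuity of $B_0$ that comes from the London-type elliptic system it satisfies.
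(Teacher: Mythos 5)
Your proposal follows essentially the same route as the paper: the lower bound is read off from Theorem \ref{theorem:belowHc1}, and the upper bound comes from testing the energy splitting of Proposition \ref{prop:energysplitting} against a competitor $(u_0 v_\ve, h_{\ex}A_0)$ (i.e.\ with $A'=0$) carrying a vortex line along a near-maximizer of $\|B_0\|_*$, whose length is bounded below by a Stokes/isoperimetric argument. The vortex-insertion step you flag as the main obstacle --- producing $v_\ve$ with $F_\ve(v_\ve,0)\le \pi|\vartheta_\ve|(\Omega)|\log\ve|+C_0$ and vorticity within $o(|\log\ve|^{-1})$ of $2\pi\vartheta_\ve$ in the $C_T^{0,1}$ dual norm --- is exactly what the paper imports from Alberti--Baldo--Orlandi together with a boundary-norm upgrade from Jerrard--Montero--Sternberg, so your outline is correct as stated.
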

In particular, these inequalities show that indeed $H_{c_1}^0$ is the leading order of $H_{c_1}$ as $\ve\to 0$. Of course this agrees with the previously mentioned result by Baldo, Jerrard, Orlandi, and Soner within their framework (i.e. when $H_{0,\ex}$ is taken to be a fixed unit vector). The author strongly believes that, as $\ve\to 0$,
$$
H_{c_1}=H_{c_1}^0+O(1).
$$
To prove this result, one needs to avoid the uncertainty of order $O(\log|\log \ve|)$ in the lower bound for $H_{c_1}$ of Theorem \ref{theorem:belowHc1}. To accomplish this, it is crucially important to characterize, near the first critical field, the behavior of the vorticity $\mu(u,A)$ of global minimizers of $GL_\ve$. We plan to address this question in future work.

\medskip
Our next result shows that beyond the first critical field there exists a locally minimizing vortexless configuration. A similar result was proved by Serfaty in 2D (see \cite{Ser2}*{Theorem 1}).

\begin{theorem}\label{theorem:meissner} Let $\alpha\in \left(0,\frac13\right)$. There exists $\ve_0>0$ such that, for any $\ve<\ve_0$, if $h_{\ex}\leq \ve^{-\alpha}$ then there exists a vortexless configuration $(u_\ve,A_\ve)=(u_0u_\ve',h_{\ex}A_0+A_\ve')\in H^1(\Omega,\C)\times [A_{\ex}+\dot H^1_{\diver=0}]$ with $A_\ve'\cdot \nu=0$ on $\partial\Omega$, which locally minimizes $GL_\ve$ in $H^1(\Omega,\C)\times [A_{\ex}+H_{\curl}]$. In addition, it satisfies the following properties:
\begin{enumerate}[leftmargin=*,font=\normalfont]
\item $h_{\ex}^2J(A_0)+o(1)\leq GL_\ve(u_\ve,A_\ve)\leq h_{\ex}^2J(A_0)$ and $\|1-|u_\ve|\|_{L^\infty(\Omega,\C)}=o(1)$.
\item The configuration $(u_\ve',A_\ve')$ satisfies
$$
\inf_{\theta\in[0,2\pi]}\|u_\ve'-e^{i\theta}\|_{H^1(\Omega,\C)}+\|A_\ve'\|_{\dot H^1_{\diver=0}}\to 0\quad \mathrm{as} \ \ve\to0.
$$
\item Up to a gauge transformation, $(u_\ve,A_\ve)$ converges to $(u_0,h_{\ex}A_0)$. More precisely, we have
$$
\inf_{\theta\in[0,2\pi]}\|u_\ve-e^{i\theta}u_0\|_{H^1(\Omega,\C)}+\|A_\ve-h_{\ex}A_0\|_{\dot H^1_{\diver=0}}\to 0\quad \mathrm{as} \ \ve\to0.
$$
\end{enumerate}
\end{theorem}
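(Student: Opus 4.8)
The plan is to obtain $(u_\ve,A_\ve)$ as the minimizer of $GL_\ve$ over a restricted class of Meissner-type configurations, to derive a priori estimates from the energy splitting of Proposition~\ref{prop:energysplitting} showing that this constrained minimizer converges to $(u_0,h_{\ex}A_0)$ and stays in the interior of the constraint, and finally to upgrade it to a genuine local minimizer of $GL_\ve$ by a truncation argument.

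\emph{Construction and energy estimates.} Fix a small $\eta_0\in(0,\tfrac12)$ and let $\mathcal K$ be the class of configurations $(u_0u',h_{\ex}A_0+A')$ with $u'\in H^1(\Omega,\C)$, $\|1-|u'|\|_{L^\infty(\Omega)}\le\eta_0$, and $A'\in\dot H^1_{\diver=0}$ with $A'\cdot\nu=0$ on $\partial\Omega$. By Proposition~\ref{prop:energysplitting}, minimizing $GL_\ve$ over $\mathcal K$ is equivalent to minimizing
$$
\mathcal F_\ve(u',A')\colonequals F_\ve(u',A')+\tfrac12\int_{\R^3\setminus\Omega}|\curl A'|^2-h_{\ex}\int_\Omega\mu(u',A')\wedge B_0+R_0
$$
over $\mathcal K$. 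Since $\mathcal K$ is weakly closed and $\mathcal F_\ve$ is weakly lower semicontinuous and coercive on it (the magnetic energy $\|A'\|_{\dot H^1_{\diver=0}}^2$ dominates the term $h_{\ex}\int_\Omega\mu(u',A')\wedge B_0$, estimated below), the direct method yields a minimizer $(u'_\ve,A'_\ve)$; set $(u_\ve,A_\ve)\colonequals(u_0u'_\ve,h_{\ex}A_0+A'_\ve)$. Testing with $(u',A')=(1,0)$, for which $\mathcal F_\ve(1,0)=0$, gives the upper bound $GL_\ve(u_\ve,A_\ve)\le h_{\ex}^2J(A_0)$ of property~(1). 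For the lower bound, on $\mathcal K$ one has $|u'|\ge 1-\eta_0>0$, so integrating by parts against $B_0$ (using $B_0\times\nu=0$ on $\partial\Omega$ and $\curlcurl B_0=H_{0,\ex}-B_0$) gives
$$
\Bigl|h_{\ex}\int_\Omega\mu(u'_\ve,A'_\ve)\wedge B_0\Bigr|\le C h_{\ex}\,\|1-|u'_\ve|^2\|_{L^2}\,\|\nabla_{A'_\ve}u'_\ve\|_{L^2}\le C h_{\ex}\ve\,F_\ve(u'_\ve,A'_\ve),
$$
using $\tfrac{1}{2\ve^2}\|1-|u'_\ve|^2\|_{L^2}^2\le F_\ve$. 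As $h_{\ex}\ve\le\ve^{1-\alpha}\to0$, this is absorbed into $\tfrac12F_\ve$; together with the control of the remainder $R_0$ on $\mathcal K$---this is where the restriction $\alpha<\tfrac13$ is used, the worst error terms there scaling like positive powers of $\ve\,h_{\ex}^3$ times $F_\ve^{1/2}$---and with $\min_{\mathcal K}\mathcal F_\ve\le0$ one obtains
$$
GL_\ve(u_\ve,A_\ve)\ge h_{\ex}^2J(A_0)+o(1),\qquad F_\ve(u'_\ve,A'_\ve)+\tfrac12\int_{\R^3\setminus\Omega}|\curl A'_\ve|^2=o(1).
$$
From the latter, $\|A'_\ve\|_{\dot H^1_{\diver=0}}^2=\|\curl A'_\ve\|_{L^2(\R^3)}^2=o(1)$ and $\|\nabla u'_\ve\|_{L^2}\le\|\nabla_{A'_\ve}u'_\ve\|_{L^2}+\|A'_\ve\|_{L^6}\|u'_\ve\|_{L^3}=o(1)$; since moreover $\bigl\|\,|u'_\ve|-1\bigr\|_{L^\infty}\le\eta_0$ is small, $u'_\ve$ is $H^1$-close to a unimodular constant $e^{i\theta}$, which gives property~(2). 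Property~(3) then follows by applying the gauge transformation $\phi=h_{\ex}\phi_{A_0}\in H^2_{\loc}(\R^3)$, which sends $(u_\ve,A_\ve)$ to $(u'_\ve,A'_\ve)$ up to the unimodular factor $u_0$ (using here a quantitative version $F_\ve(u'_\ve,A'_\ve)=o(\ve^{2\alpha})$ of the energy bound, obtained from a sharper competitor, to absorb the factor $\|\nabla u_0\|_{L^\infty}=O(h_{\ex})$).

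\emph{Interiority and local minimality.} The constrained minimizer satisfies the Euler--Lagrange system of $\mathcal F_\ve$ on $\mathcal K$: the equations in $A'$ and in the phase of $u'$ are unconstrained, while the modulus equation carries a nonnegative measure Lagrange multiplier supported on $\{|u'_\ve|=1-\eta_0\}$, of the right sign for the maximum principle. Feeding $F_\ve(u'_\ve,A'_\ve)=o(1)$ and $\|h_{\ex}A_0\|_{L^\infty}=O(\ve^{-\alpha})$ into elliptic estimates (De Giorgi--Nash--Moser / maximum principle, whose constants remain controlled precisely for $\alpha<\tfrac13$) for the equation satisfied by $1-|u'_\ve|^2$ gives $\|1-|u'_\ve|\|_{L^\infty(\Omega)}=o(1)$; in particular the constraint is inactive, $u_\ve$ has no zeros, $(u_\ve,A_\ve)$ solves the full Ginzburg--Landau equations, and $\|1-|u_\ve|\|_{L^\infty}=o(1)$, completing property~(1). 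Finally, to see that $(u_\ve,A_\ve)$ is a \emph{local} minimizer of $GL_\ve$ in $H^1(\Omega,\C)\times[A_{\ex}+H_{\curl}]$ (equivalently, modulo gauge, in $H^1\times\dot H^1_{\diver=0}$), take any competitor $(v,B)$ close to $(u_\ve,A_\ve)$ in this space and replace $v$ by the truncation of its modulus into $[1-\eta_0,1+\eta_0]$: truncation from above never increases $GL_\ve$, while truncation from below increases it by at most $o(1)$ times the distance from $(v,B)$ to $(u_\ve,A_\ve)$ (the only delicate point being the effect on the phase energy on the small set where $|v|$ is close to $0$, controlled using the equi-integrability of $|\nabla_Bv|^2$ coming from $H^1$-convergence). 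Hence the competitor may be assumed to lie in $\mathcal K$, where $(u_\ve,A_\ve)$ is the minimum.

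\emph{Main obstacle.} The crux is the second part: extracting the uniform bound $\|1-|u'_\ve|\|_{L^\infty}=o(1)$ from the constrained Euler--Lagrange system in the presence of the large background field $h_{\ex}A_0$ (this is where $\alpha<\tfrac13$ is genuinely needed, through the competition between the penalization scale $\ve^{-2}$ and powers of $h_{\ex}$), and making rigorous the passage from ``interior constrained minimizer'' to ``local minimizer of $GL_\ve$'', i.e.\ controlling the energy cost of truncating nearby competitors whose modulus may a priori vanish on a small set.
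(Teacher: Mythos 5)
Your construction differs from the paper's in one essential respect: you minimize over the set $\mathcal K$ cut out by the pointwise constraint $\|1-|u'|\|_{L^\infty(\Omega)}\le\eta_0$, whereas the paper minimizes over the set $U=\{(u,A):F_\ve(u',A')<\ve^{2/3}\}$, which is \emph{open} in $H^1(\Omega,\C)\times[A_{\ex}+H_{\curl}]$. This is not a cosmetic difference. Once the paper shows its constrained minimizer lies strictly inside $U$ (via the grid/ball construction of \cite{Rom}, which controls $\|\mu(u',A')\|_{C_T^{0,1}(\Omega)^*}$ by $C(\de+\ve\de^{-2})F_\ve(u',A')$ without any lower bound on $|u'|$ --- this is where $\alpha<\frac13$ enters), local minimality and criticality are automatic, and Chiron's clearing-out theorem applies to the resulting solution of \eqref{GLeq}. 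Your $\mathcal K$ is weakly closed but has empty interior in the $H^1$ topology, so even after you show the constraint is inactive you must still pass from ``minimizer over $\mathcal K$'' to ``local minimizer of $GL_\ve$'', and this is where your argument has a genuine gap. An $H^1$-neighborhood of $(u_\ve,A_\ve)$ contains configurations carrying a small vortex loop: if $w$ has a vortex ring of radius $r$ with core scale $\ve$, then $\|w-1\|_{H^1(\Omega,\C)}^2=O(r\log(r/\ve))\to0$ as $r\to0$, so $v=u_\ve w$ is an admissible nearby competitor. For such a $v$ the truncation $|v|\mapsto\max(|v|,1-\eta_0)$ produces the map $(1-\eta_0)v/|v|$ near the zero set, which is \emph{not} in $H^1$ when the zero set carries nonzero degree (the phase gradient scales like $1/\rho$ in the normal plane, giving a logarithmically divergent Dirichlet integral). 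So the truncated competitor does not lie in $\mathcal K$, the comparison with the constrained minimum breaks down, and equi-integrability of $|\nabla_Bv|^2$ cannot repair this, since the obstruction is topological rather than a matter of smallness of the bad set. Ruling out such competitors amounts precisely to a vortex-energy lower bound, i.e.\ to the $\ve$-level estimates that the paper's choice of $U$ is designed to exploit.

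Two smaller points. First, your interiority step replaces the paper's use of \cite{Chi} (which requires the full unconstrained equations \eqref{GLeq} in the Coulomb gauge) by a maximum-principle argument for the constrained Euler--Lagrange system with a measure-valued multiplier on the contact set $\{|u'_\ve|=1-\eta_0\}$; this is plausible but is exactly the kind of delicate step that needs to be carried out in detail, and as written it is only asserted. Second, your vorticity estimate on $\mathcal K$ (integrating by parts against $B_0$ using $|u'|\ge1-\eta_0$) is essentially Proposition \ref{prop:vort} of the appendix and gives the much better bound $C\ve h_{\ex}F_\ve$, valid for all $\alpha<1$; the constraint $\alpha<\frac13$ then enters your argument only through the bootstrap $F_\ve\le C\ve^{2-4\alpha}=o(\ve^{2\alpha})$ needed for property (3), not where you locate it. This cleanliness is real, but it is purchased precisely by the pointwise lower bound on $|u'|$ that makes $\mathcal K$ non-open and creates the gap above.
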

Let us point out that in Remark \ref{remark:condalpha} we explain why we require $\alpha<\frac13$.

\medskip
Our last result concerns the uniqueness, up to a gauge transformation, of locally minimizing vortexless configurations.
\begin{theorem}\label{theorem:uniqueness}
Let $\alpha,c\in (0,1)$. There exists $\ve_0>0$ such that, for any $\ve<\ve_0$, if $h_{\ex}\leq \ve^{-\alpha}$ then a configuration $(u,A)$ which locally minimizes $GL_\ve$ in $H^1(\Omega,\C)\times [A_{\ex}+H_{\curl}]$ and satisfies $|u|\geq c$ and $F_\ve(u',A')\leq \ve^{1+\de}$ for some $\de>0$, is unique up to a gauge transformation.
\end{theorem}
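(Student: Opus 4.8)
The plan is to show that any two configurations $(u_1,A_1)$ and $(u_2,A_2)$ satisfying the hypotheses are gauge equivalent, via a quantitative linearization argument that exploits the strong bound $F_\ve(u_i',A_i')\le\ve^{1+\de}$. First I would reduce and fix the gauge: a local minimizer is a critical point, so each $(u_i,A_i)$ solves the Ginzburg--Landau equations and, by the maximum principle, $|u_i|\le 1$ in $\Omega$; using Lemma~\ref{lemma:decompositionOmega} and the gauge normalization behind the splitting of Proposition~\ref{prop:energysplitting}, I may assume $A_i=h_\ex A_0+A_i'$ with $A_i'\in\dot H^1_{\diver=0}$ and $A_i'\cdot\nu=0$ on $\partial\Omega$. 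Since $|u_i|\ge c>0$ and $\Omega$ is simply connected, I lift the phase of $u_i':=u_0^{-1}u_i$, writing $u_i'=\rho_ie^{i\psi_i}$ with $\rho_i\in[c,1]$, and use the remaining constant-phase freedom to impose $\int_\Omega\psi_i=0$.

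Next I would establish a priori estimates. From the identity $-\tfrac12\Delta|u_i|^2+|\nabla_{A_i}u_i|^2=\tfrac1{\ve^2}|u_i|^2(1-|u_i|^2)$ and $|u_i|\ge c$, the maximum principle gives $\|1-|u_i|^2\|_{L^\infty(\Omega)}\le\tfrac{\ve^2}{c^2}\|\nabla_{A_i}u_i\|_{L^\infty(\Omega)}^2$. Writing the Ginzburg--Landau equations in the variables $(u_i',A_i')$ and running an elliptic bootstrap — the $\ve^{-2}$ nonlinearity being tamed by the lower bound $|u_i|\ge c$ — together with $\|\nabla_{A_i'}u_i'\|_{L^2}^2+\|\curl A_i'\|_{L^2}^2+\ve^{-2}\|1-|u_i'|^2\|_{L^2}^2\le 2\ve^{1+\de}$ and the embeddings \eqref{space1}--\eqref{space2}, one obtains $\|u_i'-1\|_{C^1(\overline\Omega)}\to0$, $\|A_i'\|_{C^1(\overline\Omega)}\to0$ with explicit rates, $\|\nabla\psi_i-A_i\|_{L^\infty(\Omega)}\lesssim h_\ex$, and $\|1-\rho_i^2\|_{L^\infty(\Omega)}\lesssim\ve^2h_\ex^2\le\ve^{2-2\alpha}$.

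Then comes the difference estimate. Set $w=u_1-u_2$, $a=A_1'-A_2'\in\dot H^1_{\diver=0}$, $\sigma=\psi_1-\psi_2$ (which has mean zero on $\Omega$), and write $w=u_2w_0$ with $w_0=s+ir$. Subtracting the Ginzburg--Landau equations gives a linear system for $(w,a)$ with quadratic remainders, and subtracting the current-conservation identities $\diver(\rho_i^2(\nabla\psi_i-A_i))=0$ gives an elliptic Neumann problem for $\sigma$. Testing the $u$-equation with $w$, the $A$-equation with $a$, the $\sigma$-equation with $\sigma$, and integrating by parts, one arrives at
\[
\int_\Omega|\nabla_{A_2}w|^2+\frac1{\ve^2}\int_\Omega s^2+\int_{\R^3}|\curl a|^2+\int_\Omega\rho_2^2|\nabla\sigma-a|^2\ \le\ \mathcal R,
\]
where $\mathcal R$ collects terms that are each at least quadratic in $(w,a,\sigma,s)$ and carry an $o(1)$ prefactor — controlled either by the closeness of $(u_i',A_i')$ to $(1,0)$ or by $\ve^2h_\ex^2=\ve^{2-2\alpha}$. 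Using $\ve^{-2}\|s\|_{L^2}^2$ for the amplitude part of $w$, $\|\curl a\|_{L^2(\R^3)}^2$ for $\|a\|_{\dot H^1}$ (and $\|a\|_{L^2(\Omega)}$ via \eqref{space1}), and $\|\nabla\sigma-a\|_{L^2(\Omega)}^2$ together with Poincar\'e for $\sigma$ (hence for the phase part of $w$), one absorbs $\mathcal R$ into the left-hand side and concludes it vanishes; thus $w\equiv0$, $a\equiv0$, $\sigma\equiv0$, i.e. the two configurations agree in the chosen gauge.

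The hard part will be the coercivity bookkeeping in the last step: one must verify that \emph{every} cross term in $\mathcal R$ — in particular those coupling the amplitude difference $s$ and the potential difference $a$ through the $O(h_\ex)$ background current, e.g. $\int_\Omega(\rho_1^2-\rho_2^2)(\nabla\psi_1-A_1)\cdot a$ with $\rho_1^2-\rho_2^2=2\rho_2^2s+O(|w_0|^2)$ — can be split by Young's inequality so that all resulting pieces fall strictly inside the available $\ve^{-2}\|s\|_{L^2}^2$ and $\|\curl a\|_{L^2(\R^3)}^2$, the residuals being of order $\ve^{2-2\alpha}$ times the squared differences. Because $\rho_i$ may be as small as $c$ these constants degrade, so the argument genuinely needs both the $\ve^{-2}$ gain and $\alpha<1$; the companion difficulty is the elliptic bootstrap in the a priori step, where the $\ve^{-2}$ nonlinearity must be controlled while carefully tracking the powers of $h_\ex\le\ve^{-\alpha}$.
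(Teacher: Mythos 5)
Your overall scheme --- subtract the Ginzburg--Landau equations for the two solutions, test with the differences $(w,a,\sigma)$, and absorb the remainder into a coercive quadratic form --- is genuinely different from the paper's, and it has a gap exactly at the step you flag as ``the hard part''. The left-hand side of your claimed inequality controls the phase difference $\sigma=\psi_1-\psi_2$ only through $\int_\Omega\rho_2^2|\nabla\sigma-a|^2$, hence through $\|\sigma\|_{L^2}$ with an $O(1)$ Poincar\'e constant and no gain in $\ve$. But $\mathcal R$ unavoidably contains terms in which $\sigma$ itself (not $\nabla\sigma-a$) appears with a large prefactor. For instance, subtracting the first GL equation and testing with $w$ produces $\frac1{\ve^2}\int_\Omega(1-|u_1|^2)|w|^2$ on the right, and $|w|^2=\rho_2^2(s^2+r^2)$ with $r=\frac{\rho_1}{\rho_2}\sin\sigma$; even granting your bound $\|1-|u_1|^2\|_{L^\infty}\lesssim\ve^{2-2\alpha}$, this leaves $\ve^{-2\alpha}\int_\Omega\sin^2\sigma$, which after Poincar\'e is $\ve^{-2\alpha}$ times the available good terms and cannot be absorbed. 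The same happens when the $O(h_{\ex})$ background current is paired against $w$ rather than against $s$ alone: Young's inequality leaves $h_{\ex}^2\|\sigma\|_{L^2}^2$ with no compensating $\ve^{-2}$. This is not a bookkeeping accident: $GL_\ve$ is far from convex along straight lines in $(u,A)$ (its Hessian is only marginally nonnegative in phase directions, being exactly degenerate along the gauge orbit), so a subtract-and-test argument that keeps $\sigma$ as an independent variable cannot close without reproducing by hand the cancellations that make the second variation nonnegative in those directions.

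The paper's proof eliminates the phase altogether: each solution is gauged so that its order parameter is real and positive, $(u_j,A_j)\mapsto(\eta_j,\tilde A_j)$ with $\tilde A_j=h_{\ex}(A_0-\nabla\phi_0)+A_j'-\nabla\phi_j$, and one shows
$\tfrac12 GL_\ve(\eta_1,\tilde A_1)+\tfrac12 GL_\ve(\eta_2,\tilde A_2)-GL_\ve\bigl(\tfrac{\eta_1+\eta_2}2,\tfrac{\tilde A_1+\tilde A_2}2\bigr)>0$
unless the two coincide. The only indefinite contribution is the cross term coming from $\eta_j^2|\tilde A_j|^2$, and it is dominated by $\frac1{\ve^2}\int(\eta_1-\eta_2)^2+\int\eta_1^2|\tilde A_1-\tilde A_2|^2$ via AM--GM once $\|\tilde A_j\|_{L^\infty(\Omega,\R^3)}=o(\ve^{-1})$ --- this is precisely where the hypothesis $F_\ve(u_j',A_j')\le\ve^{1+\de}$ enters, through an $L^p$ interpolation for $\nabla\eta_j$ and an elliptic estimate for $\phi_j$ with $p$ slightly above $3$. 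No $C^1$ convergence of $u_j'$ is needed (and your claim $\|u_i'-1\|_{C^1(\overline\Omega)}\to0$ is in fact doubtful, since $\nabla\psi_i$ is of size $h_{\ex}$ pointwise). Note that the midpoint $\bigl(\tfrac{\eta_1+\eta_2}2,\tfrac{\tilde A_1+\tilde A_2}2\bigr)$ is a nonlinear path in the original configuration space --- that is the whole point --- and the conclusion uses local minimality (the midpoint strictly beats the worse endpoint), not merely criticality. To salvage your linearization you would have to work exclusively with the gauge-invariant differences $(\rho_1-\rho_2,\,\nabla\sigma-a)$, which essentially reproduces the paper's convexity computation.
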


\begin{remark}
The assumption that $F_\ve(u',A')\leq \ve^{1+\de}$ for some $\de>0$ plays a crucial role in the proof of this result. In Proposition \ref{prop:estimteFree}, we prove that if $\alpha\in\left(0,\frac14\right)$ then this condition is implied by the other assumptions of this theorem provided that $GL_\ve(u,A)\leq GL_\ve(u_0,h_{\ex}A_0)=h_{\ex}^2J(A_0)$, i.e. uniqueness holds without assuming that $F_\ve(u',A')\leq \ve^{1+\de}$ for some $\de>0$ if the Ginzburg-Landau energy of the vortexless local minimizer is below the energy of $(u_0,h_{\ex}A_0)$. 
We observe that this condition is satisfied by the locally minimizing solution of Theorem \ref{theorem:meissner}.

Let us also note that if $\alpha\geq \frac14$ then the strategy of the proof of Proposition \ref{prop:estimteFree} fails. For this reason, we are able to guaranty the uniqueness of the locally minimizing vortexless configuration of Theorem \ref{theorem:meissner} only if $\alpha<\frac14$.

Finally, let us emphasize that this uniqueness result allows to conclude that the locally minimizing configuration of Theorem \ref{theorem:meissner} is, indeed, up to a gauge transformation, the unique global minimizer of the Ginzburg-Landau energy below the first critical field. Therefore Theorem \ref{theorem:meissner}, in particular, provides a detailed characterization of the behavior of the Meissner solution.
\end{remark}
Thus, we prove that below the first critical field, up to a gauge transformation, the Meissner solution is the unique global minimizer of $GL_\ve$. Beyond this value, at least up to $h_{\ex}=o(\ve^{-\frac13})$, a Meissner-type solution continues to exists as a local minimizer of the Ginzburg-Landau energy. This solution is unique, up to a gauge transformation, at least up to $h_{\ex}=o(\ve^{-\frac14})$. Since this branch of vortexless solutions remains stable, in the process of raising $h_{\ex}$ vortices should not appear at $H_{c_1}$, but rather at a critical value of $h_{\ex}$ called the \emph{superheating field} $H_{\mathrm{sh}}$, at which the Meissner-type solution becomes unstable. It is expected that $H_{\mathrm{sh}}=O(\ve^{-1})$. The interested reader can refer to \cite{Xia} and references therein for further details.

\subsection*{Outline of the paper} The rest of the paper is organized as follows. In Section \ref{sec:preliminaries} we introduce some basic quantities and notation, describe two Hodge-type decompositions, and present some classical results in Ginzburg-Landau theory. In Section \ref{sec:below} we define the approximation of the Meissner solution, split the Ginzburg-Landau energy, and prove Theorem \ref{theorem:belowHc1}. In Section \ref{sec:Hc1} we present the proof of Theorem \ref{theorem:estimateHc1} and compute $\|B_0\|_*$ in a special case. Section \ref{sec:Meissner} contains the proof of Theorem \ref{theorem:meissner} and Section \ref{sec:Uniqueness} the proof of Theorem \ref{theorem:uniqueness}. Appendix \ref{sec:appendix} is devoted to prove some improved estimates for locally minimizing configurations, that allow to obtain the uniqueness of the Meissner-type solution of Theorem \ref{theorem:meissner} for $\alpha<\frac14$, as a consequence of Theorem \ref{theorem:uniqueness}.

\section{Preliminaries}\label{sec:preliminaries}
\subsection{Some definitions and notation}
We define the superconducting current of a pair $(u,A)\in H^1(\Omega,\C)\times H^1(\Omega,\R^3)$ as the $1$-form
$$
j(u,A)=(iu,d_A u)=\sum_{k=1}^3 (iu,\partial_k u-iA_ku)dx_k.
$$
It is related to the vorticity $\mu(u,A)$ of a configuration $(u,A)$ through
$$
\mu(u,A)=dj(u,A)+dA.
$$
This quantity can be seen as a $1$-current, which is defined through its action on $1$-forms by the relation
$$
\mu(u,A)(\phi)=\int_\Omega \mu(u,A)\wedge \phi.
$$
We recall that the boundary of a $1$-current $T$ relative to a set $\Theta$, is the $0$-current $\partial T$ defined by
$$
\partial T(\phi)=T(d\phi)
$$
for all smooth compactly supported $0$-form $\phi$ defined in $\Theta$. In particular, $\mu(u,A)$ has zero boundary relative to $\Omega$. We denote by $|T|(\Theta)$ the mass of a $1$-current $T$ in $\Theta$.

\subsection{Hodge-type decompositions}
Next, we provide a decomposition of vector fields in $H_{\curl}$.
\begin{lemma}\label{lemma:decompositionR3}
Every vector field $A\in H_{\curl}$ can be decomposed as
$$
A=\curl \BB +\nabla \Phi,
$$
where $\BB,\curl \BB\in \dot H^1_{\diver=0}$ and $\Phi\in H^2_{\loc}(\R^3)$.
\end{lemma}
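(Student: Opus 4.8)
The plan is to obtain the decomposition by first extracting the curl-free part via a scalar potential, and then applying a standard Hodge/Biot--Savart construction to the remaining divergence-free field. Let $A \in H_{\curl}$, so $A \in H^1_{\loc}(\R^3,\R^3)$ and $H \colonequals \curl A \in L^2(\R^3,\R^3)$. First I would solve $\Delta \Phi = \diver A$ in $\R^3$; since $\diver A \in L^2_{\loc}(\R^3)$, standard elliptic theory gives a solution $\Phi \in H^2_{\loc}(\R^3)$. Set $\widetilde A \colonequals A - \nabla \Phi$, which then satisfies $\diver \widetilde A = 0$ in $\R^3$ and $\curl \widetilde A = H \in L^2(\R^3,\R^3)$.

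Next I would produce $\BB$ by a Biot--Savart type formula. Note that $\diver H = \diver \curl A = 0$, so $H$ is a divergence-free $L^2$ field. Define $\BB$ to be the unique solution in $\dot H^1_{\diver=0}$ of $\curl \curl \BB = H$ in $\R^3$, equivalently (using $\diver \BB = 0$) of $-\Delta \BB = \curl H$ — but since $\curl H$ need only be a distribution, it is cleaner to set $\BB \colonequals \curl \Psi$ where $-\Delta \Psi = H$, $\diver \Psi = 0$; by \eqref{space1} and \eqref{space2} one gets $\Psi \in \dot H^1_{\diver=0}$ from $H \in L^2$, hence $\curl \BB = \curl\curl \Psi = -\Delta \Psi = H$ (using $\diver \Psi = 0$) lies in $L^2$ and in fact $\curl \BB \in \dot H^1_{\diver=0}$ since its curl is $\curl H \in \dot H^{-1}$ and its divergence vanishes; and $\BB = \curl\Psi \in L^2(\R^3,\R^3)$ with $\nabla \BB \in L^2$, so $\BB \in \dot H^1(\R^3,\R^3)$ with $\diver \BB = \diver\curl\Psi = 0$, i.e. $\BB \in \dot H^1_{\diver=0}$, and by the characterization of $\dot H^1$ quoted from \cite{KozSoh} this is consistent with $\BB \in L^6$.

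It then remains to check that $\widetilde A = \curl \BB$. Both fields are divergence-free, both have curl equal to $H$, and $\widetilde A - \curl \BB$ therefore satisfies $\curl(\widetilde A - \curl\BB) = 0$ and $\diver(\widetilde A - \curl\BB) = 0$ in $\R^3$; hence each component is harmonic. To conclude it vanishes I would invoke a Liouville-type argument: $\widetilde A - \curl\BB$ should be controlled in a space that excludes nonzero harmonic functions (e.g. one shows $\widetilde A$ itself, after the potential subtraction, lies in $L^6_{\loc}$ with $L^2$ curl and divergence and matches $\curl\BB$ at infinity), so the difference is a constant, and being in $\dot H^1$ or $L^6$ forces the constant to be $0$. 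Absorbing any leftover constant or harmonic polynomial into $\nabla\Phi$ if necessary, we obtain $A = \curl\BB + \nabla\Phi$ with the stated regularity.

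The main obstacle is the uniqueness/Liouville step: controlling the harmonic remainder $\widetilde A - \curl\BB$ well enough to conclude it is zero. The subtlety is that $\widetilde A$ a priori only has $L^2_{\loc}$ (not global) control, while $\curl\BB$ lives in $L^6 \cap \dot H^1$; one must argue that the construction of $\Phi$ can be arranged so that $\widetilde A$ inherits enough decay (or, alternatively, work modulo the freedom in $\Phi$ — adding a harmonic function to $\Phi$ changes $\nabla\Phi$ by a harmonic gradient, and this is exactly the slack needed to kill the remainder). Once that is handled, every other step is routine elliptic regularity together with the Sobolev embeddings \eqref{space1}--\eqref{space2} already recorded in the excerpt.
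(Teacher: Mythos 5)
Your overall strategy is the paper's: subtract $\nabla\Phi_1$ with $\Delta\Phi_1=\diver A$, solve a vector Poisson problem to produce the solenoidal part, and deal with the leftover harmonic field. But as written there are two problems. First, the construction of $\BB$ is mislabelled in a way that breaks the statement. For the decomposition $A=\curl\BB+\nabla\Phi$ one needs $\curl(\curl\BB)=\curl A$, i.e. $\curlcurl\BB=H$, which with $\diver\BB=0$ reads $-\Delta\BB=H$ (not $-\Delta\BB=\curl H$); this is exactly the problem the paper solves via \cite{KozSoh}*{Theorem 1}, and since $H\in L^2(\R^3,\R^3)$ there is no distributional difficulty to avoid. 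Your substitute $\BB\colonequals\curl\Psi$ with $-\Delta\Psi=H$, $\diver\Psi=0$, instead satisfies $\curl\BB=\curlcurl\Psi=-\Delta\Psi=H=\curl A$, so the field you end up with plays the role of the lemma's $\curl\BB$, not of $\BB$ itself: with your labels the decomposition would read $A=\BB+\nabla\Phi$, and the required membership $\curl\BB\in\dot H^1_{\diver=0}$ would become $\curl A\in\dot H^1$, which is false in general since $\curl A$ is only assumed to lie in $L^2$. The fix is simply to let the lemma's $\BB$ be your $\Psi$ (equivalently, solve $\curlcurl\BB=\curl A$, $\diver\BB=0$ directly, as the paper does); then $\curl\BB=\curl\Psi$ inherits $\dot H^1$ regularity from $\nabla^2\Psi\in L^2$.

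Second, the step you single out as the main obstacle --- proving that the harmonic remainder $\widetilde A-\curl\BB$ vanishes --- is both unnecessary and not provable from the hypotheses: $A$ is only in $H^1_{\loc}$, so there is no decay available to feed a Liouville argument, and a harmonic vector field on $\R^3$ need not be constant without growth control. The remainder is curl-free and divergence-free on the simply connected $\R^3$, hence equals $\nabla\Phi_2$ for a harmonic (so smooth, in particular $H^2_{\loc}$) function $\Phi_2$ --- not merely a harmonic polynomial --- and the lemma asks nothing more of $\Phi$ than $H^2_{\loc}$ regularity, so one simply sets $\Phi=\Phi_1+\Phi_2$. This is precisely the alternative you mention in passing (``work modulo the freedom in $\Phi$''), and it is exactly what the paper does; promoting it from a parenthetical to the actual argument removes the only genuine difficulty you flag.
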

\begin{proof}
First, let us observe that there exists a function $\Phi_1\in H^2_{\loc}(\R^3,\R^3)$ such that
$$
\Delta \Phi_1 = \diver A \in L^2_{\loc}(\R^3,\R^3).
$$
Second, we consider the problem
$$
\left\{
\begin{array}{rcl}
\curlcurl B &=& \curl A \in L^2(\R^3,\R^3)\\
\diver B&=&0.
\end{array}\right.
$$
By observing that $\curlcurl B =-\Delta B$, \cite{KozSoh}*{Theorem 1} provides the existence of a solution $\BB\in \dot H^1_{\diver=0}$ to this problem such that $\curl \BB\in \dot H^1_{\diver=0}$.

Finally, by noting that 
$$
\curl (A-\nabla \Phi_1-\curl \BB)=\diver (A-\nabla \Phi_1-\curl \BB)=0,
$$
we deduce that 
$$
A-\nabla \Phi_1-\curl \BB=\nabla \Phi_2
$$
for some harmonic function $\Phi_2\in H^2_{\loc}(\R^3,\R^3)$. By writing $\Phi=\Phi_1+\Phi_2$, we obtain the result.
\end{proof}

We now recall a decomposition of vector fields in $H^1(\Omega,\R^3)$. The proof of this result can be found in \cite{BetBreOrl}*{Appendix A}.
\begin{lemma}\label{lemma:decompositionOmega}
There exists a constant $C=C(\Omega)$ such that for every $A\in H^1(\Omega,\R^3)$ there exist a unique vector field $B_A\in \{B\in H^2(\Omega,\R^3)\ | \ \diver B=0\ \mathrm{in}\ \Omega\}$ and a unique function $\phi_A\in \{\phi \in H^2(\Omega)\ | \ \int_\Omega \phi_A=0\}$ satisfying 
$$
\left\lbrace 
\begin{array}{rcll}
A&=&\curl B_A + \nabla \phi_A&\mathrm{in}\ \Omega\\
B_A\times \nu&=&0&\mathrm{on}\ \partial\Omega\\
\nabla\phi_A\cdot \nu&=&A\cdot \nu &\mathrm{on}\ \partial\Omega
\end{array}\right.
$$
and
$$
\|B_A\|_{H^2(\Omega,\R^3)}+\|\phi_A\|_{H^2(\Omega)}\leq C\|A\|_{H^1(\Omega,\R^3)}.
$$
\end{lemma}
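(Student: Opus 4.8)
The plan is to realize the stated formula as a Hodge-type decomposition in which $\nabla\phi_A$ absorbs the curl-free part of $A$ and $\curl B_A$ the divergence-free remainder. The guiding observation is that, because $\diver\curl B_A=0$ and $\curl\nabla\phi_A=0$, applying $\diver$ to $A=\curl B_A+\nabla\phi_A$ and reading off the normal component on $\partial\Omega$ forces $\phi_A$ to solve the Neumann problem
\[
\Delta\phi_A=\diver A\ \text{ in }\Omega,\qquad \nabla\phi_A\cdot\nu=A\cdot\nu\ \text{ on }\partial\Omega .
\]
Once $\phi_A$ is produced, the field $W\colonequals A-\nabla\phi_A$ is automatically divergence-free with vanishing normal trace, and the task reduces to writing such a $W$ as a curl in the prescribed gauge and with the tangential boundary condition.

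First I would construct $\phi_A$. The compatibility condition for the Neumann problem above is $\int_\Omega\diver A=\int_{\partial\Omega}A\cdot\nu$, which holds automatically by the divergence theorem; hence a solution exists and is unique up to an additive constant, and the normalization $\int_\Omega\phi_A=0$ pins it down. Since $A\in H^1(\Omega,\R^3)$ one has $\diver A\in L^2(\Omega)$ and, by the trace theorem, $A\cdot\nu\in H^{1/2}(\partial\Omega)$, so $H^2$ regularity for the Neumann problem on the $C^2$ domain $\Omega$ yields $\phi_A\in H^2(\Omega)$ together with $\|\phi_A\|_{H^2(\Omega)}\leq C(\|\diver A\|_{L^2(\Omega)}+\|A\cdot\nu\|_{H^{1/2}(\partial\Omega)})\leq C\|A\|_{H^1(\Omega,\R^3)}$.

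Next I would produce $B_A$. Setting $W=A-\nabla\phi_A$, one checks $\diver W=\diver A-\Delta\phi_A=0$ in $\Omega$ and $W\cdot\nu=A\cdot\nu-\nabla\phi_A\cdot\nu=0$ on $\partial\Omega$, with $\|W\|_{H^1(\Omega,\R^3)}\leq C\|A\|_{H^1(\Omega,\R^3)}$. Here I would invoke the classical theory of the $\curl$ operator on the topologically trivial domain $\Omega$ (the construction underlying \cite{BetBreOrl}*{Appendix A}): a divergence-free field with vanishing normal trace admits a unique divergence-free vector potential $B_A$ with $\curl B_A=W$ in $\Omega$ and $B_A\times\nu=0$ on $\partial\Omega$. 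Simple connectivity of $\Omega$ is precisely what annihilates the cohomological obstruction to the existence of such a potential, while elliptic regularity up to the $C^2$ boundary upgrades $W\in H^1$ to $B_A\in H^2(\Omega,\R^3)$ with $\|B_A\|_{H^2(\Omega,\R^3)}\leq C\|W\|_{H^1(\Omega,\R^3)}\leq C\|A\|_{H^1(\Omega,\R^3)}$. Adding the two estimates gives the asserted bound, and by construction $A=\curl B_A+\nabla\phi_A$ holds with all three side conditions.

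Finally, uniqueness and the main difficulty. If $A$ has two such decompositions, then taking divergences gives $\Delta\phi_A^{(1)}=\Delta\phi_A^{(2)}=\diver A$; moreover the condition $B_A^{(i)}\times\nu=0$ forces $\curl B_A^{(i)}\cdot\nu=0$ on $\partial\Omega$ (the normal trace of a curl depends only on the tangential trace of the field), so $\nabla\phi_A^{(i)}\cdot\nu=A\cdot\nu$ and $\phi_A^{(1)}-\phi_A^{(2)}$ is harmonic with vanishing Neumann data and zero mean, whence $\phi_A^{(1)}=\phi_A^{(2)}$. Then $V\colonequals B_A^{(1)}-B_A^{(2)}$ satisfies $\curl V=0$, $\diver V=0$ and $V\times\nu=0$; simple connectivity gives $V=\nabla\chi$ with $\chi$ harmonic, while $V\times\nu=0$ makes $\chi$ locally constant on $\partial\Omega$, so (as $\partial\Omega$ is connected) $\chi$ is constant on the boundary and therefore constant in $\Omega$, giving $V=0$. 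The genuinely three-dimensional and delicate point is the middle step: the construction and $H^2$-regularity of the gauged vector potential $B_A$ carrying the tangential condition $B_A\times\nu=0$, where both the topology of $\Omega$ and the boundary regularity enter; by contrast the scalar Neumann step and the uniqueness argument are routine.
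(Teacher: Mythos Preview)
The paper does not give its own proof of this lemma; it simply cites \cite{BetBreOrl}*{Appendix~A}. Your outline is exactly the standard route one expects behind that reference: extract $\phi_A$ via the Neumann problem for $\diver A$, then invert $\curl$ on the divergence-free, normal-free remainder $W=A-\nabla\phi_A$ in the gauge $\diver B_A=0$, $B_A\times\nu=0$, with $H^2$ regularity coming from the ellipticity of the underlying $\diver$--$\curl$ system on a $C^2$ domain. You also correctly identify the construction and regularity of the gauged vector potential as the substantive step.

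One caveat on uniqueness: you invoke connectedness of $\partial\Omega$, but simple connectivity of a bounded $\Omega\subset\R^3$ does not imply this. The spherical shell $\{r<|x|<R\}$ is simply connected with two boundary components, and there $V=\nabla(1/|x|)$ is a nonzero field with $\curl V=0$, $\diver V=0$, $V\times\nu=0$; hence $B_A$ is genuinely nonunique and the lemma, as literally stated, fails there. What is actually needed is that the space of harmonic Dirichlet fields is trivial, i.e.\ the second Betti number of $\Omega$ vanishes, equivalently $\partial\Omega$ is connected. This is presumably a standing hypothesis in \cite{BetBreOrl}; your argument is correct once that assumption is made explicit.
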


\subsection{Ginzburg-Landau equations}\label{Section:GLeq}
\begin{definition}[Critical point of $GL_\ve$] We say that $(u,A)\in H^1(\Omega,\C)\times [A_{\ex}+H_{\curl}]$ is a critical point of $GL_\ve$ if for every  smooth and compactly supported configuration $(v,B)$ we have 
$$
\frac d{dt}GL_\ve(u+tv,A+tB)|_{t=0}=0.
$$
\end{definition}
We now present the Euler-Lagrange equations satisfied by critical points of $GL_\ve$. This is a well-known result, but for the sake of completeness we prove it here.
\begin{proposition}[Ginzburg-Landau equations]
If $(u,A)\in H^1(\Omega,\C)\times [A_{\ex}+H_{\curl}]$ is a critical point of $GL_\ve$ then $(u,A)$ satisfies the system of equations
\begin{equation}\label{GLeq}
\tag{GL}
\left\lbrace 
\begin{array}{rcll}
-(\nabla_A)^2u&=&\displaystyle\frac1{\ve^2}u(1-|u|^2)&\mathrm{in}\ \Omega\\
\curl(H-H_{\ex})&=&(iu,\nabla_A u)\chi_\Omega &\mathrm{in}\ \R^3\\
\nabla_A u\cdot \nu&=&0&\mathrm{on}\ \partial \Omega\\
\lbrack H-H_{\ex}\rbrack\times\nu&=&0&\mathrm{on}\ \partial\Omega, 
\end{array}
\right.
\end{equation}
where $\chi_\Omega$ is the characteristic function of $\Omega$, $[\, \cdot \, ]$ denotes the jump across $\partial\Omega$, $\nabla_A u\cdot \nu=\sum_{j=1}^3 (\partial_ju-iA_ju)\nu_j$, and the covariant Laplacian $(\nabla_A)^2$is defined by
$$
(\nabla_A)^2u=(\diver -iA\cdot )\nabla_Au.
$$
\end{proposition}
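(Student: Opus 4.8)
The plan is to compute the first variation of $GL_\ve$ in each of its two arguments separately and then invoke the fundamental lemma of the calculus of variations. Since the definition of critical point requires $\frac{d}{dt}GL_\ve(u+tv,A+tB)|_{t=0}=0$ for \emph{every} smooth, compactly supported $(v,B)$, we may set $B=0$ to isolate the variation in $u$ and $v=0$ to isolate the variation in $A$. For a fixed such $(v,B)$ the map $t\mapsto GL_\ve(u+tv,A+tB)$ is a polynomial in $t$ (quartic through the term $(1-|u|^2)^2$, quadratic through the covariant-gradient and magnetic terms), so differentiation at $t=0$ is purely algebraic and no limiting argument under the integral sign is required. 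Throughout we shall use that $u\in H^1(\Omega,\C)\hookrightarrow L^6(\Omega,\C)$ in dimension three, which provides the regularity needed to give a pointwise and trace meaning to the terms in \eqref{GLeq}.

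\emph{Variation in $u$.} Writing $\nabla_A(u+tv)=\nabla_Au+t\nabla_Av$ and using $\frac{d}{dt}(1-|u+tv|^2)^2\big|_{t=0}=-4(1-|u|^2)(u,v)$, one obtains
\[
\frac{d}{dt}GL_\ve(u+tv,A)\Big|_{t=0}=\int_\Omega(\nabla_Au,\nabla_Av)-\frac1{\ve^2}(1-|u|^2)(u,v),
\]
the inner products being understood componentwise in $\C\cong\R^2$. The key step is the covariant integration by parts
\[
\int_\Omega(\nabla_Au,\nabla_Av)=-\int_\Omega\big((\nabla_A)^2u,v\big)+\int_{\partial\Omega}(\nabla_Au\cdot\nu,v),
\]
which follows by decomposing $\nabla_Av=\nabla v-iAv$, integrating by parts componentwise the $\nabla v$ contribution, and recombining with the help of $(z,iw)=-(iz,w)$ and the definition $(\nabla_A)^2u=(\diver-iA\,\cdot\,)\nabla_Au$; this is precisely the place where the signs must be tracked carefully. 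Testing first against $v\in C_c^\infty(\Omega,\C)$ yields $-(\nabla_A)^2u=\frac1{\ve^2}u(1-|u|^2)$ in the sense of distributions in $\Omega$; the right-hand side then belongs to $L^2(\Omega)$, so $(\nabla_A)^2u\in L^2(\Omega)$ and $\nabla_Au\cdot\nu$ is well defined as a trace on $\partial\Omega$, and testing against general smooth compactly supported $v$ forces the boundary term to vanish, i.e. $\nabla_Au\cdot\nu=0$ on $\partial\Omega$.

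\emph{Variation in $A$.} From $\nabla_{A+tB}u=\nabla_Au-itBu$ and $\curl(A+tB)=H+t\curl B$, a short computation using $(\partial_ku-iA_ku,-iu)=-(iu,\partial_ku-iA_ku)$ shows that $\frac{d}{dt}|\nabla_{A+tB}u|^2$ equals $-2\,B\cdot(iu,\nabla_Au)$ at $t=0$, and hence
\[
\frac{d}{dt}GL_\ve(u,A+tB)\Big|_{t=0}=-\int_\Omega B\cdot(iu,\nabla_Au)+\int_{\R^3}(H-H_{\ex})\cdot\curl B.
\]
To integrate the last term by parts I would split $\R^3=\Omega\cup(\R^3\setminus\overline\Omega)$ and apply $\int_UV\cdot\curl B-\int_U\curl V\cdot B=\int_{\partial U}(\nu_U\times V)\cdot B$ on each piece, the contribution at infinity being absent because $B$ is compactly supported and $H-H_{\ex}\in L^2(\R^3)$. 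Choosing $B$ supported inside $\Omega$, then inside $\R^3\setminus\overline\Omega$, gives $\curl(H-H_{\ex})=(iu,\nabla_Au)\chi_\Omega$ in $\R^3$ (the right-hand side being $L^2$, $\curl(H-H_{\ex})$ is an $L^2$ function away from $\partial\Omega$ and its tangential traces from the two sides of $\partial\Omega$ make sense), while testing against arbitrary $B$ annihilates the remaining surface integral, giving $[H-H_{\ex}]\times\nu=0$ on $\partial\Omega$. Collecting the four relations yields \eqref{GLeq}.

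The step I expect to be the main obstacle is the bookkeeping across $\partial\Omega$ in the magnetic variation: one has to track correctly the (a priori nonzero) tangential jump of $H-H_{\ex}$ when integrating by parts over all of $\R^3$, and it is exactly the requirement that the distribution thereby produced on $\partial\Omega$ vanish that yields the boundary equation $[H-H_{\ex}]\times\nu=0$. Everything else is standard sign-chasing in the complex integration by parts followed by the fundamental lemma, together with the elementary Sobolev embedding used to interpret the terms in \eqref{GLeq}.
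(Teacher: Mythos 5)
Your proposal is correct and follows essentially the same route as the paper: compute the two first variations separately, use the covariant integration by parts $(\nabla_A u,\nabla_A v)=\diver(\nabla_A u,v)-((\nabla_A)^2u,v)$ to extract the interior equation and the Neumann-type condition, and then integrate the magnetic term by parts over $\Omega$ and $\R^3\setminus\overline\Omega$ separately so that the surviving surface integral forces $[H-H_{\ex}]\times\nu=0$. The only (immaterial) difference is one of ordering: the paper first reads off $\curl(H-H_{\ex})=(iu,\nabla_A u)\chi_\Omega$ distributionally on all of $\R^3$ and then tests it piecewise to isolate the jump, whereas you localize the test fields first and recover the jump at the end.
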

\begin{proof}
We have
$$
\frac d{dt}GL_\ve(u+tv,A)|_{t=0}=\int_\Omega (\nabla_A u,\nabla_A v)-\frac1{\ve^2}\int_\Omega (u,v)(1-|u|^2).
$$
By noting that
$$
(\nabla_A u, \nabla_A v)=\diver (\nabla_A u,v)-((\nabla_A)^2u,v),
$$
where $(\nabla_A u,v)=((\partial_1u-iA_1u,v),(\partial_2u-iA_2u,v),(\partial_3u-iA_3u,v))$, and by integrating by parts, we obtain
$$
\frac d{dt}GL_\ve(u+tv,A)|_{t=0}= \int_{\partial\Omega} (\nabla_A u \cdot \nu,v)-\int_\Omega ((\nabla_A)^2u,v)-\frac1{\ve^2}\int_\Omega (u,v)(1-|u|^2).
$$
Since this is true for any $v$, we find
$$
(\nabla_A)^2u=\frac1{\ve^2}u(1-|u|^2)\ \mathrm{in}\ \Omega\quad \mathrm{and}\quad \nabla_A u\cdot \nu=0\ \mathrm{on}\ \partial\Omega.
$$
On the other hand, we have
$$
\frac d{dt}GL_\ve(u,A+tB)|_{t=0}=-\int_\Omega (i Bu,\nabla_A u)+\int_{\R^3}(H-H_{\ex})\cdot \curl B.
$$
By integration by parts, we get
$$
\frac d{dt}GL_\ve(u,A+tB)|_{t=0}=-\int_\Omega (iu,\nabla_A u)\cdot B+\int_{\R^3}\curl(H-H_{\ex})\cdot B.
$$
We deduce that 
$$
\curl(H-H_{\ex})=(iu,\nabla_A u)\chi_\Omega \quad \mathrm{in}\ \R^3.
$$
By testing this equation against $B$ and by integrating by parts over $\Omega$, we find
$$
\int_\Omega (H-H_{\ex})\cdot \curl B-\int_{\partial \Omega}((H-H_{\ex})\times \nu)\cdot B-\int_\Omega (iu,\nabla_A u)\cdot B=0.
$$
Now, by integrating by parts over $\R^3\setminus \Omega$, we get
$$
\int_{\R^3 \setminus \Omega} (H-H_{\ex})\cdot \curl B+\int_{\partial (\R^3\setminus \Omega)}((H-H_{\ex})\times \nu)\cdot B=0.
$$
Thus
$$
\int_{\partial \Omega}([H-H_{\ex}]\times \nu)\cdot B=0,
$$
which implies that $[H-H_{\ex}]\times \nu=0$ on $\partial\Omega$.
\end{proof}

\begin{remark}
By taking the curl of the second Ginzburg-Landau equation, we find
\begin{equation}\label{EqVorticity}
\curlcurl (H-H_{\ex})+H\chi_\Omega=\mu(u,A)\chi_\Omega,
\end{equation}
in the sense of currents.
We will come back to this equation later on.
\end{remark}

\subsection{Minimization of \texorpdfstring{$GL_\ve$}{the Ginzburg-Landau energy}}
\begin{proposition}\label{prop:minimization}
The minimum of $GL_\ve$ over $H^1(\Omega,\C)\times [A_{\ex}+H_{\curl}]$ is achieved.
\end{proposition}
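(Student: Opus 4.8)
The plan is to apply the direct method of the calculus of variations, but with one preliminary manoeuvre: since $H_{\curl}$ contains arbitrary gradients $\nabla\Phi$ with $\Phi\in H^2_{\loc}(\R^3)$, a minimizing sequence may have $A_n-A_{\ex}$ unbounded in every Sobolev norm while only $\curl(A_n-A_{\ex})$ stays bounded, so I would first fix the Coulomb gauge. Concretely: the infimum is finite because $GL_\ve(1,A_{\ex})=\frac12\int_\Omega|A_{\ex}|^2<\infty$ (here $\nabla_{A_{\ex}}1=-iA_{\ex}$ and $\curl A_{\ex}-H_{\ex}=0$). Given a minimizing sequence $(u_n,A_n)$, decompose $A_n-A_{\ex}=\curl\BB_n+\nabla\Phi_n$ with $\BB_n,\curl\BB_n\in\dot H^1_{\diver=0}$ and $\Phi_n\in H^2_{\loc}(\R^3)$ via Lemma \ref{lemma:decompositionR3}, and replace $(u_n,A_n)$ by the gauge-equivalent configuration $(u_ne^{-i\Phi_n},A_n-\nabla\Phi_n)$; by gauge invariance this does not change the energy, and now $A_n':=A_n-A_{\ex}=\curl\BB_n\in\dot H^1_{\diver=0}$.

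Next I would extract a priori bounds from $GL_\ve(u_n,A_n)\le C$. The magnetic term gives $\|\curl A_n'\|_{L^2(\R^3)}\le C$, so by \eqref{space2} and $\curlcurl\BB_n=\curl A_n'$ the sequence $A_n'$ is bounded in $\dot H^1_{\diver=0}$; by \eqref{space1} and Rellich it converges, along a subsequence, weakly in $\dot H^1$ and strongly in $L^p(\Omega)$ for $p<6$, with $\curl A_n'\rightharpoonup\curl A_\infty'$ weakly in $L^2(\R^3)$. The potential term gives $\|1-|u_n|^2\|_{L^2(\Omega)}\le C\ve$, hence $u_n$ bounded in $L^4(\Omega)$; since $A_n=A_{\ex}+A_n'$ is bounded in $L^6(\Omega)$ and $u_n$ is bounded in $L^3(\Omega)$, Hölder gives $A_nu_n$ bounded in $L^2(\Omega)$, so the kinetic term forces $\nabla u_n$ bounded in $L^2(\Omega)$ and thus $u_n$ bounded in $H^1(\Omega,\C)$. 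Passing to a further subsequence, $u_n\rightharpoonup u_\infty$ weakly in $H^1(\Omega)$, strongly in $L^p(\Omega)$ for $p<6$, and a.e.

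It then remains to check lower semicontinuity term by term, with $A_\infty:=A_{\ex}+A_\infty'\in[A_{\ex}+H_{\curl}]$ (using $\dot H^1_{\diver=0}\subset H_{\curl}$). The magnetic term is weakly lower semicontinuous by weak $L^2(\R^3)$ convergence of $\curl A_n'$; the potential term actually passes to the limit, since $u_n\to u_\infty$ in $L^4(\Omega)$ gives $|u_n|^2\to|u_\infty|^2$ in $L^2(\Omega)$ and hence $(1-|u_n|^2)^2\to(1-|u_\infty|^2)^2$ in $L^1(\Omega)$. The one genuinely delicate point — and the reason the gauge fixing was indispensable — is the covariant kinetic term $\frac12\int_\Omega|\nabla u_n-iA_nu_n|^2$: here I would use that both $A_n\to A_\infty$ and $u_n\to u_\infty$ strongly in $L^4(\Omega)$, so that $A_nu_n\to A_\infty u_\infty$ strongly in $L^2(\Omega)$; combined with $\nabla u_n\rightharpoonup\nabla u_\infty$ weakly in $L^2(\Omega)$ this yields $\nabla_{A_n}u_n\rightharpoonup\nabla_{A_\infty}u_\infty$ weakly in $L^2(\Omega)$, and weak lower semicontinuity of the $L^2$ norm gives $\int_\Omega|\nabla_{A_\infty}u_\infty|^2\le\liminf\int_\Omega|\nabla_{A_n}u_n|^2$. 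Adding the three contributions gives $GL_\ve(u_\infty,A_\infty)\le\liminf GL_\ve(u_n,A_n)=\inf GL_\ve$, so the minimum is attained.
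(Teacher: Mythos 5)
Your proof is correct and follows essentially the same route as the paper: gauge-fix via Lemma \ref{lemma:decompositionR3} so that $A_n-A_{\ex}\in\dot H^1_{\diver=0}$, extract a priori bounds from the energy (using \eqref{space1}--\eqref{space2} and the $L^4$ bound on $u_n$ to control $A_nu_n$ and hence $\nabla u_n$), pass to weak/strong limits, and conclude by term-by-term lower semicontinuity. Your treatment of the covariant kinetic term (strong $L^4\times L^4\to L^2$ convergence of $A_nu_n$ plus weak $L^2$ convergence of $\nabla u_n$) is a cleaner write-up of the step the paper dispatches with ``standard arguments,'' but it is not a different argument.
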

\begin{proof}
Let $\{(\tilde u_n,\tilde A_n)\}_n$ be a minimizing sequence for $GL_\ve$ in $H^1(\Omega,\C)\times [A_{\ex}+H_{\curl}]$. Lemma \ref{lemma:decompositionR3} yields a gauge transformed sequence $\{(u_n,A_n)\}_n$ such that $A_n\in [A_{\ex}+\dot H^1_{\diver=0}]$. In particular, we have that $GL_\ve(\tilde u_n,\tilde A_n)=GL_\ve(u_n,A_n)$ and
$$
\|\nabla (A_n-A_{\ex})\|_{L^2(\R^3,\R^3)}=\|\curl (A_n-A_{\ex})\|_{L^2(\R^3,\R^3)}.
$$
Using the bound $GL_\ve(u_n,A_n)\leq C$, where $C$ is independent of $n$, we find that 
$$
\|1-|u_n|^2\|_{L^2(\Omega,\C)},\ \|\nabla_{A_n}u_n\|_{L^2(\Omega,\C^3)},\ \mathrm{and}\quad \|\curl (A_n-A_{\ex})\|_{L^2(\R^3,\R^3)}
$$
are bounded independently of $n$. Therefore, by recalling \eqref{space1}, we deduce that $A_n-A_{\ex}$ is bounded in $\dot H^1(\R^3,\R^3)$. 
Because $\{u_n\}_n$ is bounded in $L^4(\Omega)$ we find that $\{iA_nu_n\}_n$ is bounded in $L^2(\Omega,\C^3)$. By noting that $\nabla u_n=\nabla_{A_n}u_n+iA_nu_n$, we conclude that $u_n$ is bounded in $H^1(\Omega,\C)$.

We may then extract a subsequence, still denoted $\{(u_n,A_n)\}_n$, such that $\{(u_n,A_n-A_{\ex})\}_n$ converges to some $(u,A-A_{\ex})$ weakly in $H^1(\Omega,\C)\times \dot H_{\diver=0}$ and, by compact Sobolev embedding, strongly in every $L^q(\Omega,\C)\times L^q(\Omega,\R^3)$ for $q<6$. 

Let us now show that $(u,A)$ is a minimizer of $GL_\ve$. By strong $L^4(\Omega,\C)$ convergence, 
$$
\liminf_n \|1-|u_n|^2\|_{L^2(\Omega,\C)}=\|1-|u|^2\|_{L^2(\Omega,\C)}.
$$
Also, by weak $\dot H^1(\R^3,\R^3)$ convergence, we have
\begin{align*}
\liminf_n\|\curl (A_n-A_{\ex})\|_{L^2(\R^3,\R^3)}&=\liminf_n\|\nabla (A_n-A_{\ex})\|_{L^2(\R^3,\R^3)}\\ 
&\geq \|\nabla (A-A_{\ex})\|_{L^2(\R^3,\R^3)}=\|\curl(A-A_{\ex})\|_{L^2(\R^3,\R^3)}.
\end{align*}
Moreover, standard arguments show that 
\begin{align*}
\liminf_n \|\nabla_{A_n}u_n\|_{L^2(\Omega,\C^3)}^2&=\liminf_n \|\nabla u_n\|^2_{L^2(\Omega,\C^3)}-2\int_\Omega (\nabla u_n,iA_nu_n)+\|A_nu_n\|_{L^2(\Omega,\C^3)}
\\ &\geq  \|\nabla_{A}u\|_{L^2(\Omega,\C^3)}.
\end{align*}
Hence
$$
\liminf_n GL_\ve(u_\ve,A_\ve)\geq GL_\ve(u,A).
$$
\end{proof}
\section{Global minimizers below \texorpdfstring{$H_{c_1}^0$}{Hc\textoneinferior\textzerosuperior}}\label{sec:below}
\subsection{An approximation of the Meissner solution}
Next, we find a configuration $(u_0,h_{\ex}A_0)$ with $|u_0|=1$ and which satisfies \eqref{EqVorticity} with zero right-hand side. As mentioned in the introduction, this turns out to be a good approximation of the Meissner solution, the vortexless configuration which minimizes $GL_\ve$ below the first critical field. 

\smallskip
Let us consider a configuration of the form $(e^{i \phi_0},h_{\ex}A_0)$ with $\phi_0\in H^2(\Omega)$ and $A_0\in A_{0,\ex}+\dot H^1_{\diver=0}$. Observe that, by using Lemma \ref{lemma:decompositionOmega} and by letting $u_0\colonequals e^{i \phi_0}$, we have
\begin{align*}
GL_\ve(u_0,h_{\ex}A_0)=&\frac12 \int_\Omega |\nabla \phi_0-h_{\ex}(\curl B_{A_0}+\nabla\phi_{A_0})|^2+\frac12\int_{\R^3}|h_{\ex}\curl A_0-H_{\ex}|^2\\
=&\frac12 \int_\Omega |\nabla (\phi_0-h_{\ex}\phi_{A_0})|^2+ h_{\ex}^2|\curl B_{A_0}|^2+\frac{h_{\ex}^2}2\int_{\R^3}|\curl (A_0-A_{0,\ex})|^2.
\end{align*}
By choosing $\phi_0=h_{\ex}\phi_{A_0}$, we obtain
$$
GL_\ve(u_0,h_{\ex}A_0)=\frac{h_{\ex}^2}2\int_\Omega|\curl B_{A_0}|^2+\frac{h_{\ex}^2}2\int_{\R^3}|\curl (A_0-A_{0,\ex})|^2\equalscolon h_{\ex}^2 J(A_0).
$$
We let $A_0$ to be the minimizer of $J$ in the space $\left(A_{0,\ex}+\dot H^1_{\diver=0},\|\, \cdot \, \|_{\dot H^1_{\diver=0}}\right)$, whose existence and uniqueness follows by noting that $J$ is continuous, coercive, and strictly convex in this Hilbert space (recall \eqref{space1} and \eqref{space2}). We also let $H_0=\curl A_0$ and here and in the rest of the paper we use the notation $B_0\colonequals B_{A_0}$. 

One can easily check that, for any $A\in \dot H^1_{\diver=0}$, we have 
$$
\int_\Omega \curl B_0\cdot \curl B_A + \int_{\R^3} (H_0-H_{0,\ex})\cdot \curl A=0
$$
Because
$$
\int_\Omega \curl B_0\cdot \nabla \phi_A=\int_\Omega B_0 \cdot \curl \nabla \phi_A-\int_{\partial \Omega} (B_0\times \nu)\phi_A=0,
$$
we have
\begin{equation}\label{A0}
\int_\Omega \curl B_0\cdot A + \int_{\R^3} (H_0-H_{0,\ex})\cdot \curl A=0
\end{equation}
Moreover, Lemma \ref{lemma:decompositionR3} implies that this equality also holds for any $A\in H_{\curl}$.

\medskip
Let us observe that, for any $A\in C_0^\infty (\R^3,\R^3)$, by integration by parts, we have
$$
\int_\Omega \curl B_0\cdot A+ \int_{\R^3} \curl(H_0-H_{0,\ex})\cdot A=0.
$$
Therefore $A_0$ satisfies the Euler-Lagrange equation
\begin{equation}\label{EulerLagrangeA0}
\curl(H_0-H_{0,\ex})+\curl B_0\chi_\Omega=0\quad \mathrm{in}\ \R^3.
\end{equation}
In addition, it is easy to see that the boundary condition $[H_0-H_{0,\ex}]=0$ on $\partial\Omega$ holds. 

By taking the curl of the previous equation, we find 
$$
\curlcurl(H_0-H_{0,\ex})+H_0\chi_\Omega=0\quad \mathrm{in}\  \R^3,
$$
namely (up to multiplying by $h_{\ex}$) \eqref{EqVorticity} with $\mu(u_0,A_0)=0$. 

\medskip
On the other hand, by integration by parts, for any vector field $B\in C_0^\infty(\Omega,\R^3)$, we have
$$
\int_\Omega B_0\cdot \curl B+\int_\Omega (H_0-H_{0,\ex})\cdot \curl B=0.
$$
Besides, for any function $\phi \in C_0^\infty(\Omega)$, we have
$$
\int_\Omega (B_0+(H_0-H_{0,\ex}))\cdot \nabla \phi =-\int_\Omega \diver (B_0+(H_0-H_{0,\ex})) \phi =0
$$
Then, given any vector field $A\in C_0^\infty(\Omega,\R^3)$, by taking $B=B_A$ and $\phi=\phi_A$ in the previous equalities, we find
$$
\int_\Omega (B_0+(H_0-H_{0,\ex}))\cdot (\curl B_A+\nabla \phi_A)=\int_\Omega (B_0+(H_0-H_{0,\ex}))\cdot A=0.
$$
Hence, the free-divergence vector field $B_0$ weakly solves the problem
\begin{equation}\label{eqB_0}
\left\{
\begin{array}{rcll}
-\Delta B_0+B_0&=&H_{0,\ex}&\mathrm{in}\ \Omega\\
B_0\times \nu&=&0&\mathrm{on}\ \partial\Omega.
\end{array}\right.
\end{equation}
\begin{remark}\label{remark:B_0}
Since we assume that $\|H_{0,\ex}\|_{C^{0,\beta}(\Omega,\R^3)}<C$, by standard elliptic regularity theory, we deduce that $B_0\in C_T^{2,\beta}(\Omega,\R^3)$ with $\|B_0\|_{C_T^{2,\beta}(\Omega,\R^3)}\leq C$ for some constant independent of $\ve$. In addition, if the applied field is taken to be uniform in $\Omega$, i.e. if $H_{0,\ex}$ is a fixed unit vector in $\Omega$, then $B_0$ depends on the domain $\Omega$ only.
\end{remark}

\subsection{Energy-splitting}
Next, by using the approximation of the Meissner solution, we present a splitting of $GL_\ve$.
\begin{proposition}\label{prop:energysplitting} For any $(u,A)\in H^1(\Omega,\C)\times [A_{\ex}+H_{\curl}]$, letting $u=u_0u'$ and $A=h_{\ex}A_0+A'$, where $(u_0,h_{\ex}A_0)$ is the approximation of the Meissner solution, we have
\begin{equation}\label{Energy-Splitting}GL_\ve(u,A)=h_{\ex}^2J(A_0)+F_\ve(u',A')+\frac12\int_{\R^3\setminus \Omega}|\curl A'|^2-h_{\ex}\int_\Omega \mu(u',A')\wedge B_0+R_0,
\end{equation}
where $F_\ve(u',A')$ is the free energy of the configuration $(u',A')\in H^1(\Omega,\C)\times H_{\curl}$, i.e.
$$
F_\ve(u',A')=\frac12 \int_\Omega |\nabla_{A'}u'|^2+\frac1{2\ve^2}(1-|u'|^2)^2+|\curl A'|^2
$$
and
$$ 
R_0=\frac{h_{\ex}^2}2 \int_\Omega (|u|^2-1)|\curl B_0|^2.
$$
In particular, $|R_0|\leq C\ve h_{\ex}^2E_\ve(|u|)^{\frac12}$ with $\displaystyle E_\ve(|u|)=\frac12 \int_\Omega |\nabla |u||^2+\frac1{2\ve^2}(1-|u|^2)^2$.
\end{proposition}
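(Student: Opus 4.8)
The plan is to expand $GL_\ve(u,A)$ directly from the multiplicative splitting $u = u_0u'$, $A = h_{\ex}A_0 + A'$, using throughout that $|u_0| = 1$, so that $|u| = |u'|$ and the potential term is unchanged. The one pointwise identity needed is for the covariant gradient: since $u_0 = e^{ih_{\ex}\phi_{A_0}}$ and, by Lemma~\ref{lemma:decompositionOmega}, $\nabla\phi_{A_0} - A_0 = -\curl B_0$ in $\Omega$, a short computation gives
$$
\nabla_A u = u_0\bigl(\nabla_{A'}u' - i h_{\ex}\,u'\,\curl B_0\bigr)\qquad\text{in }\Omega ,
$$
and hence, taking squared moduli,
$$
|\nabla_A u|^2 = |\nabla_{A'}u'|^2 - 2h_{\ex}\,j(u',A')\cdot\curl B_0 + h_{\ex}^2\,|u|^2\,|\curl B_0|^2 ,
$$
where $j(u',A') = (iu',\nabla_{A'}u')$ is the superconducting current. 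For the magnetic part I would write $H - H_{\ex} = h_{\ex}(H_0 - H_{0,\ex}) + \curl A'$ and expand $\tfrac12\int_{\R^3}|H-H_{\ex}|^2$ into $\tfrac{h_{\ex}^2}{2}\int_{\R^3}|H_0 - H_{0,\ex}|^2$, the cross term $h_{\ex}\int_{\R^3}(H_0 - H_{0,\ex})\cdot\curl A'$, and $\tfrac12\int_{\R^3}|\curl A'|^2$.

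I would then collect terms. Writing $\tfrac12\int_{\R^3}|\curl A'|^2 = \tfrac12\int_\Omega|\curl A'|^2 + \tfrac12\int_{\R^3\setminus\Omega}|\curl A'|^2$, the pieces $\tfrac12\int_\Omega|\nabla_{A'}u'|^2$, $\tfrac1{2\ve^2}\int_\Omega(1-|u'|^2)^2$ and $\tfrac12\int_\Omega|\curl A'|^2$ assemble into $F_\ve(u',A')$, leaving $\tfrac12\int_{\R^3\setminus\Omega}|\curl A'|^2$. For the terms depending only on $(u_0,h_{\ex}A_0)$, I would split $h_{\ex}^2|u|^2|\curl B_0|^2 = h_{\ex}^2|\curl B_0|^2 + h_{\ex}^2(|u|^2-1)|\curl B_0|^2$, the second contributing $2R_0$, so that together with $\tfrac{h_{\ex}^2}{2}\int_{\R^3}|H_0 - H_{0,\ex}|^2$ these produce exactly $h_{\ex}^2 J(A_0) + R_0$, by the definition of $J$.

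What remains is to recognize the two surviving cross terms, $-h_{\ex}\int_\Omega j(u',A')\cdot\curl B_0$ and $h_{\ex}\int_{\R^3}(H_0 - H_{0,\ex})\cdot\curl A'$, as $-h_{\ex}\int_\Omega\mu(u',A')\wedge B_0$. For the magnetic cross term I would use the variational identity~\eqref{A0}, which holds for every vector field in $H_{\curl}$ and so applies to $A'$, giving $\int_{\R^3}(H_0 - H_{0,\ex})\cdot\curl A' = -\int_\Omega\curl B_0\cdot A'$; the two cross terms then sum to $-h_{\ex}\int_\Omega(j(u',A') + A')\cdot\curl B_0$. On the other hand, $\mu(u',A') = dj(u',A') + dA'$, which under the identification of $2$-forms on $\R^3$ with vector fields is $\curl j(u',A') + \curl A'$; integrating by parts (using that $B_0\in C_T^{2,\beta}(\Omega,\R^3)$) yields $\int_\Omega\mu(u',A')\wedge B_0 = \int_\Omega(j(u',A') + A')\cdot\curl B_0$, the boundary contributions vanishing precisely because $B_0\times\nu = 0$ on $\partial\Omega$. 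This closes the splitting identity, and the estimate $|R_0|\le C\ve h_{\ex}^2 E_\ve(|u|)^{1/2}$ follows at once from $\|\curl B_0\|_{L^\infty(\Omega)}\le C$ (Remark~\ref{remark:B_0}), Cauchy--Schwarz applied to $R_0 = \tfrac{h_{\ex}^2}{2}\int_\Omega(|u|^2-1)|\curl B_0|^2$, and the bound $\int_\Omega(1-|u|^2)^2\le 2\ve^2 E_\ve(|u|)$.

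The computation is essentially bookkeeping, and I expect no real obstacle. The only step that needs care is the final integration by parts: one must keep the differential-form conventions for $\mu(u',A')$ straight and verify that the boundary terms truly cancel by virtue of the tangential condition $B_0\times\nu = 0$ --- this is exactly where the specific choice of $B_0 = B_{A_0}$, rather than any other gauge representative of $A_0$, makes the cross terms collapse into the clean expression $-h_{\ex}\int_\Omega\mu(u',A')\wedge B_0$.
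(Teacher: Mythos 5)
Your proposal is correct and follows essentially the same route as the paper: the pointwise identity $\nabla_A u = u_0(\nabla_{A'}u' - ih_{\ex}u'\curl B_0)$, expansion of the squares, the variational identity \eqref{A0} applied to $A'\in H_{\curl}$ to convert the magnetic cross term, and the final integration by parts using $B_0\times\nu=0$ on $\partial\Omega$. Only trivial bookkeeping constants differ (e.g.\ $\int_\Omega(1-|u|^2)^2\le 4\ve^2E_\ve(|u|)$, not $2\ve^2E_\ve(|u|)$), which is absorbed into $C$.
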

\begin{proof}
One immediately checks that $A'\in H_{\curl}$. Since $u'=u_0^{-1}u=e^{-ih_{\ex}\phi_0}u$ and $\phi_0\in H^2(\Omega,\C)$, by Sobolev embedding we deduce that $u'\in H^1(\Omega,\C)$. 

Writing $u=u_0u'$ and $A=h_\ex A_0+A'$ and plugging them into $GL_\ve(u,A)$, we obtain
\begin{multline*}
GL_\ve(u,A)=
\frac12 \int_\Omega |\nabla_{A'}u'-ih_{\ex}\curl B_0u'|^2+\frac1{2\ve^2}(1-|u'|^2)^2\\+\frac12\int_{\R^3}|\curl A'+h_\ex (H_0-H_{0,\ex})|^2.
\end{multline*}
By expanding the square terms, we get
\begin{align*}
GL_\ve(u,A)=&\frac12 \int_\Omega |\nabla_{A'}u'|^2+h_{\ex}^2|\curl B_0|^2|u'|^2-2h_{\ex}(\nabla_{A'}u',iu')\cdot\curl B_0+\frac1{2\ve^2}(1-|u'|^2)^2 \\
+&\frac12 \int_{\R^3} |\curl A'|^2+h_\ex^2 |H_0-H_{0,\ex}|^2+2h_\ex \curl A'\cdot (H_0-H_{0,\ex}).
\end{align*}
Observe that, by \eqref{A0}, we have
$$
\int_{\R^3} \curl A'\cdot (H_0-H_{0,\ex})=-\int_\Omega A'\cdot\curl B_{0}.
$$
Therefore, grouping terms and writing $|u'|^2$ as $1+(|u'|^2-1)$, we find
$$
GL_\ve(u,A)=h_{\ex}^2J(A_0)+F_\ve(u',A')+\frac12 \int_{\R^3\setminus\Omega}|\curl A'|^2-h_{\ex}\int_\Omega (j(u',A')+A')\cdot \curl B_0 + R_0.
$$
Then, an integration by parts yields
$$
\int_\Omega (j(u',A')+A')\cdot \curl B_0=\int_\Omega \mu(u',A')\wedge B_0-\int_{\partial \Omega}(j(u',A')+A')\cdot (B_0\times \nu).
$$
By using the boundary condition $B_0\times \nu=0$ on $\partial\Omega$, we find \eqref{Energy-Splitting}. The inequality for $R_0$ follows directly from the Cauchy-Schwarz inequality. 
\end{proof}
\begin{remark}
Let $\varphi\in C_T^{0,1}(\Omega)$ be a $1$-form. Observe that, by gauge invariance and by integration by parts, we have
$$
\int_\Omega \mu(u,A)\wedge \varphi=\int_\Omega \mu(u',A'+h_{\ex}\curl B_0)\wedge \varphi=\int_\Omega \mu(u',A')\wedge \varphi+h_{\ex}(1-|u|^2)\curl B_0\cdot \curl \varphi.
$$
Moreover, the Cauchy-Schwarz inequality yields
$$
\|\mu(u,A)-\mu(u',A')\|_{C_T^{0,1}(\Omega)^*}\leq C\ve h_{\ex} E_\ve(|u|)^\frac12.
$$
\end{remark}
\subsection{Proof of Theorem \ref{theorem:belowHc1}} 
\begin{proof}
Proposition \ref{prop:energysplitting} yields
\begin{equation}\label{LB}
GL_\ve(u_\ve,A_\ve)\geq h_{\ex}^2J(A_0)+F_\ve(u_\ve',A_\ve')-h_{\ex}\int_\Omega \mu(u_\ve',A_\ve')\wedge B_0+o(\ve^\frac12),
\end{equation}
where $(u_\ve,A_\ve)=(u_0u_\ve',h_\ex A_0+A_\ve')$.

\medskip\noindent
{\bf Step 1. Estimating $F_\ve(u_\ve',A_\ve')$.}
By minimality, we have
\begin{equation}\label{boundJ0}
\inf_{(u,A)\in H^1(\Omega,\C)\times [A_{\ex}+H_{\curl}]}GL_\ve(u,A)\leq GL_\ve (u_0,h_\ex A_0)=h_\ex^2 J(A_0).
\end{equation}
On the other hand, by gauge invariance, we get
\begin{align*}
F_\ve(u_\ve',A_\ve')=F_\ve(u_\ve,A_\ve-h_\ex\curl B_0) &\leq 2F_\ve(u_\ve,A_\ve)+2F_\ve(1,h_\ex \curl B_0)\\ &\leq 2F_\ve(u_\ve,A_\ve)+Ch_\ex^2,
\end{align*}
which combined with \eqref{boundJ0} implies that $F_\ve(u_\ve',A_\ve')\leq M |\log \ve|^2$. We may then apply Theorem \ref{theorem:epslevel} (with $n$ large enough) to obtain
\begin{multline*}
F_\ve(u_\ve',A_\ve')-h_{\ex}\int_\Omega \mu(u_\ve',A_\ve') \wedge B_0 \geq \\ \frac12 |\nu_\ve'|(\Omega)\left( \log \frac1\ve -C \log \log \frac1\ve\right) -h_{\ex}\int_\Omega \nu_\ve' \wedge B_0+o(|\log \ve|^{-2}),
\end{multline*}
where $C>0$ is a universal constant and $\nu_\ve'$ denotes the polyhedral $1$-dimensional current associated to the configuration $(u_\ve',A_\ve')$ by Theorem \ref{theorem:epslevel}.
By noting that 
\begin{equation}\label{nu}
\int_\Omega \nu_\ve' \wedge B_0\leq  |\nu_\ve'|(\Omega)\|B_0\|_*,
\end{equation}
we find
\begin{multline*}
F_\ve(u_\ve',A_\ve')-h_{\ex}\int_\Omega \mu(u_\ve',A_\ve') \wedge B_0 \geq \\ \frac12 |\nu_\ve'|(\Omega)\left(\log \frac1\ve - 2\|B_0\|_*h_\ex -C \log \log \frac1\ve\right)+o(|\log \ve|^{-2}).
\end{multline*}
Writing $h_{\ex}=H_{c_1}^0-K_0\log|\log \ve|$ with $H_{c_1}^0=\dfrac1{2\|B_0\|_*}|\log \ve|$, we get
$$
GL_\ve(u_\ve,A_\ve)\geq h_{\ex}^2J(A_0)+\frac12 |\nu_\ve'|(\Omega)\left(2\|B_0\|_*K_0-C\right)\log \log \frac1\ve  +o(|\log \ve|^{-2}).
$$
By using \eqref{boundJ0}, we deduce that
$$
o(|\log \ve|^{-2})\geq |\nu_\ve'|(\Omega)\left(2\|B_0\|_*K_0-C\right)\log \log \frac1\ve.
$$
Therefore, by letting $K_0\colonequals (2\|B_0\|_*)^{-1}C+1$, we deduce that $|\nu_\ve'|(\Omega)=o(|\log \ve|^{-2})$. In particular, from the vorticity estimate in Theorem \ref{theorem:epslevel} and \eqref{nu}, we deduce that $h_{\ex}\int_\Omega \mu(u_\ve',A_\ve')\wedge B_0=o(|\log \ve|^{-1})$. Therefore, by combining this with \eqref{boundJ0} and \eqref{LB}, we are led to
\begin{equation}\label{Fo(1)}
F_\ve (u_\ve',A_\ve')+\frac12\int_{\R^3\setminus \Omega}|\curl A_\ve'|^2\leq o(|\log \ve|^{-1}).
\end{equation}
In particular, we deduce that $GL_\ve(u_\ve,A_\ve)=h_{\ex}^2J(A_0)+o(|\log\ve|^{-1})$. 

\medskip\noindent
{\bf Step 2. Applying a clearing out result.}
To prove that $(u_\ve,A_\ve)$ is a vortexless configuration we use a clearing out result. Let us define
$$
v_\ve\colonequals e^{-i\varphi_\ve}u_\ve'\quad\mathrm{and}\quad X_\ve\colonequals A_\ve'-\nabla \varphi_\ve,
$$
where $\varphi_\ve$ satisfies
$$
\left\{
\begin{array}{rcll}
\Delta \varphi_\ve&=&\diver A_\ve'&\mathrm{in}\ \Omega\\
\nabla \varphi_\ve\cdot \nu&=&A_\ve'\cdot \nu&\mathrm{on}\ \partial\Omega.
\end{array}\right.
$$
This implies that $X_\ve$ is in the Coulomb gauge, i.e. it satisfies
\begin{equation}\label{condX}
\left\lbrace 
\begin{array}{rcll}
\diver X_\ve&=&0&\mathrm{in}\ \Omega\\
X_\ve \cdot \nu&=&0&\mathrm{on}\ \partial \Omega.
\end{array}
\right.
\end{equation}
Since the configuration $(u_\ve,A_\ve)$ minimizes $GL_\ve$ in $H^1(\Omega,\C)\times [A_{\ex}+H_{\curl}]$, it satisfies the Ginzburg-Landau equations \eqref{GLeq}. By observing that the configurations $(u_\ve,A_\ve)$ and $(v_\ve,X_\ve+h_\ex \curl B_0)$ are gauge equivalent in $\Omega$, we deduce that $v_\ve$ satisfies 
$$
\left\lbrace 
\begin{array}{rcll}
-(\nabla_{X_\ve+h_\ex \curl B_0})^2v_\ve&=&\dfrac1{\ve^2}v_\ve(1-|v_\ve|^2)&\mathrm{in}\ \Omega\\
\nabla_{X_\ve+h_\ex \curl B_0}v_\ve\cdot \nu&=&0&\mathrm{on}\ \partial \Omega.
\end{array}
\right.
$$
Expanding the covariant Laplacian, and using \eqref{condX} and $\curl B_0\cdot \nu =0$ on $\partial\Omega$, which follows from $B_0\times \nu=0$ on $\partial\Omega$, one can rewrite this problem in the form
\begin{equation}\label{condv}
\left\lbrace 
\begin{array}{rcll}
-\Delta v_\ve +i|\log\ve|c(x)\cdot \nabla v_\ve+|\log\ve|^2d(x)v_\ve&=&\dfrac1{\ve^2}v_\ve(1-|v_\ve|^2)&\mathrm{in}\ \Omega\\
\nabla v_\ve\cdot \nu&=&0&\mathrm{on}\ \partial \Omega,
\end{array}
\right.
\end{equation}
where
$$
c(x)\colonequals \frac{2(X_\ve+h_\ex \curl B_0)}{|\log \ve|}\quad \mathrm{and}\quad 
d(x)\colonequals \frac{|X_\ve+h_\ex \curl B_0|^2}{|\log \ve|^2}.
$$
By Remark \ref{remark:B_0} and by standard elliptic regularity theory for solutions of the Ginzburg-Landau equations in the Coulomb gauge, we have
\begin{equation}\label{boundcoef}
\| c\|_{L^\infty(\Omega,\R^3)}, \|\nabla c\|_{L^\infty(\Omega,\R^3)}, \|d\|_{L^\infty(\Omega)}, \|\nabla d\|_{L^\infty(\Omega)}\leq \Lambda_0
\end{equation}
for some constant $\Lambda_0>0$ independent of $\ve$.

In addition, by gauge invariance, we have
$$
F(u_\ve',A_\ve')=F_\ve(v_\ve,X_\ve).
$$
Since $(v_\ve,X_\ve)$ is in the Coulomb gauge, we have
$$
E_\ve(v_\ve)\colonequals F_\ve(v_\ve,0)\leq C F_\ve(v_\ve,X_\ve)
$$
for some universal constant $C>0$. We define $a_\ve(x)=1-d(x)\ve^2|\log \ve|^2$ and observe that
$$
\tilde E_\ve(v_\ve)\colonequals \frac12\int_\Omega |\nabla v_\ve|^2+\frac1{2\ve^2}(a_\ve(x)-|v_\ve|^2)^2\leq E_\ve(v_\ve)+O(\ve|\log \ve|^2).
$$
This combined with \eqref{Fo(1)}, implies that 
\begin{equation}\label{Energyv}
\tilde E_\ve(v_\ve)=o(|\log \ve|^{-1}).
\end{equation}
Finally, from \eqref{condX}, \eqref{condv}, \eqref{boundcoef}, and \eqref{Energyv}, we conclude that all the hypotheses of \cite{Chi}*{Theorem 3} are fulfilled, and therefore
$$
\|1-|u_\ve|\|_{L^\infty(\Omega,\C)}=\|1-|v_\ve|\|_{L^\infty(\Omega,\C)}\to 0 \quad \mathrm{as}\ \ve\to 0.
$$ 
This concludes the proof of the theorem.
\end{proof} 

\section{The first critical field}\label{sec:Hc1}
Let us recall that, given a fixed $\ve>0$, the first critical field is defined as the value $H_{c_1}=H_{c_1}(\ve)$ such that if $h_{\ex}<H_{c_1}$ and $(u_\ve,A_\ve)$ is a minimizer of $GL_\ve$ then $|u_\ve|>0$ in $\Omega$, while if $h_{\ex}>H_{c_1}$ and $(u_\ve,A_\ve)$ minimizes $GL_\ve$ then $u_\ve$ must vanish in $\Omega$. We now prove Theorem \ref{theorem:estimateHc1}.
\begin{proof}
Theorem \ref{theorem:belowHc1} immediately implies that
$$
H_{c_1}^0-K_0\log |\log \ve|\leq H_{c_1}.
$$
It remains to prove that $H_{c_1}\leq H_{c_1}^0+K_1$, for some constant $K_1$ sufficiently large. Let us assume towards a contradiction that $h_{\ex}=H_{c_1}^0+K$ and $(u_\ve,A_\ve)$ minimizes $GL_\ve$ in $H^1(\Omega,\C)\times [A_{\ex}+H_{\curl}]$\footnote{This in particular implies that $(u_\ve,A_\ve)$ satisfies the Ginzburg-Landau equations \eqref{GLeq} and therefore $u_\ve$ is continuous.} and $|u_\ve|>0$. 

\medskip\noindent
{\bf Step 1. Estimating $GL_\ve(u_\ve,A_\ve)$.}
We write $(u_\ve,A_\ve)=(u_0u_\ve',h_{\ex}A_0+A_\ve')$, where $(u_0,h_{\ex}A_0)$ is the approximation of the Meissner solution. Since $|u_\ve'|=|u_\ve|>0$, we deduce that the $1$-dimensional current $\nu_\ve'$ associated to $(u_\ve',A_\ve')$ by Theorem \ref{theorem:epslevel} vanishes identically, and therefore, by taking $n$ large enough, we have
$$
\|\mu(u_\ve',A_\ve')\|_{C_T^{0,1}(\Omega)^*}\leq \frac C{|\log \ve|^2}
$$
The energy-splitting \eqref{Energy-Splitting} then yields
\begin{align*}
GL_\ve(u_\ve,A_\ve)&=h_{\ex}^2J(A_0)+F_\ve(u_\ve',A_\ve')+\frac12\int_{\R^3\setminus \Omega}|\curl A_\ve'|^2+o(|\log \ve|^{-1})\\
&\geq h_{\ex}^2J(A_0)+o(|\log \ve|^{-1}).
\end{align*}
But since $(u_\ve,A_\ve)$ minimizes $GL_\ve$, we have 
$$
GL_\ve(u_\ve,A_\ve)\leq GL_\ve(u_0,h_{\ex}A_0)=h_{\ex}^2J(A_0).
$$
Combining these inequalities, we find
$$
GL_\ve(u_\ve,A_\ve)=h_{\ex}^2J(A_0)+o(|\log \ve|^{-1}).
$$

\medskip\noindent
{\bf Step 2. Definition of a vortex configuration.}
To reach a contradiction, we will show that there exists a configuration $(u^\ve_1,A^\ve_1)$, whose vorticity concentrates along a curve in $X$ for which $\|B_0\|_*$ is almost achieved, such that if $h_{\ex}\geq H_{c_1}^0+K$ then $GL_\ve(u^\ve_1,A^\ve_1)<GL_\ve(u_\ve,A_\ve)$, provided $K\geq K_1$ for some constant $K_1$ independent of $\ve$.

By definition of $\|B_0\|_*$ (recall \eqref{B_0*}), there exists a Lipschitz curve $\vart_\ve\in X$, seen as a $1$-current, with multiplicity $1$ such that $\partial\vart_\ve=0$ relative to $\Omega$ and 
\begin{equation}\label{vort1}
\frac1{|\vart_\ve|(\Omega)}\int_\Omega \vart_\ve \wedge B_0=\|B_0\|_*+o(|\log\ve|^{-1}).
\end{equation}
From the proof of \cite{AlbBalOrl2}*{Theorem 1.1 (ii)}, which in particular uses some results contained in \cite{AlbBalOrl1}, we deduce that there exists $v_\ve \in H^1(\Omega,\C)$ such that
\begin{equation}\label{upper1}
F_\ve(v_\ve,0)\leq \pi |\vart_\ve|(\Omega)|\log\ve|+C_0
\end{equation}
for some constant $C_0>0$ independent of $\ve$, and
$$
\|\mu(v_\ve,0)-2\pi \vart_\ve\|_{C_0^{0,1}(\Omega)^*}=o(|\log\ve|^{-1}),
$$
where $C_0^{0,1}(\Omega)$ denotes the space of Lipschitz vector fields with compact support in $\Omega$. Arguing as in the proof of \cite{JerMonSte}*{Proposition 3.2}, we deduce that
\begin{equation}\label{vort2}
\|\mu(v_\ve,0)-2\pi\vart_\ve\|_{C_T^{0,1}(\Omega)^*}=o(|\log\ve|^{-1}).
\end{equation}

Now we let $(u_1^\ve,A_1^\ve)$ be define by
$$
u_1^\ve=u_0v_\ve, \quad A_1^\ve=h_{\ex}A_0.
$$
Proposition \ref{prop:energysplitting} yields
\begin{equation}\label{expansion}
GL_\ve(u_1^\ve,A_1^\ve)=h_{\ex}^2J(A_0)+F_\ve(v_\ve,0)-h_{\ex}\int_\Omega \mu(v_\ve,0)\wedge B_0+R_0.
\end{equation}
From \eqref{vort1} and \eqref{vort2}, we get
$$
\int_\Omega \mu(v_\ve,0)\wedge B_0=2\pi \|B_0\|_*|\vart_\ve|(\Omega)+o(|\log\ve|^{-1}).
$$
Inserting this and \eqref{upper1} into \eqref{expansion}, we are led to
$$
GL_\ve(u_1^\ve,A_1^\ve)\leq h_{\ex}^2J(A_0)+\pi|\vart_\ve|(\Omega)|\log \ve|+C_0-2\pi\|B_0\|_*h_{\ex}|\vart_\ve|(\Omega)+o(h_{\ex}|\log\ve|^{-1}).
$$

\medskip\noindent
{\bf Step 3. Contradiction.}
Writing $h_{\ex}=H_{c_1}^0+K$ with $H_{c_1}^0=\dfrac1{2\|B_0\|_*}|\log \ve|$, we get
\begin{align*}
GL_\ve(u_1^\ve,A_1^\ve)&\leq h_{\ex}^2J(A_0)+\pi|\vart_\ve|(\Omega)|\log \ve|+C_0-\pi|\vart_\ve|(\Omega)\left(|\log \ve|+2\|B_0\|_*K\right)+o(1)\\
&=h_{\ex}^2J(A_0)+C_0-2\pi\|B_0\|_*K|\vart_\ve|(\Omega)+o(1).
\end{align*}

Clearly $\|B_0\|_*>0$, which implies that there exists a constant $C_1\in (0,1)$ such that $|\vart_\ve|(\Omega)\geq C_1$, provided $\ve$ is small enough. If this is not the case, there exists a sequence of Lipschitz curves $\vart_\ve^n\in X$ such that $|\vart_\ve^n|(\Omega)\leq 1/n$ and which satisfies \eqref{vort1}. If $\vart_\ve^n$ is a loop contained in $\Omega$ then, by the Stokes' theorem, we have
$$
\int_\Omega \vart_\ve^n\wedge B_0\leq \int_{S_n}|\curl B_0|,
$$
where $S_n$ denotes a surface with least area among those whose boundary is $\vart_\ve^n$, i.e. a solution to the associated Plateau's problem. By the isoperimetric inequality, we have
$$
\int_{S_n}|\curl B_0|\leq \|\curl B_0\|_{L^\infty(\Omega,\R^3)} \mathrm{Area}(S_n)\leq C |\vart_\ve^n|(\Omega)^2.
$$
On the other hand, if both different endpoints of $\vart_\ve^n$ belong to $\partial\Omega$, we consider the geodesic connecting the endpoints of $\vart_\ve^n$ on $\partial\Omega$, oriented accordingly to the orientation of $\vart_\ve^n$. We then denote by $\tilde\vart_\ve^n$ the loop formed by the union of $\vart_\ve$ and this geodesic. Since $B_0\times \nu=0$ on $\partial\Omega$, by Stokes' theorem, we have
$$
\int_\Omega \vart^n\wedge B_0\leq\int_{S_n} |\curl B_0|,
$$
where $S_n$ denotes a surface with least area among those whose boundary is $\tilde \vart_\ve^n$. Arguing as above, we conclude that 
$$
\int_{S_n}|\curl B_0|\leq \|\curl B_0\|_{L^\infty(\Omega,\R^3)} \mathrm{Area}(S_n)\leq C \mathrm{Length}(\tilde \vart_\ve^n)^2\leq C(\partial\Omega) |\vart_\ve^n|^2.
$$
Therefore 
$$
\frac1{|\vart_\ve^n|(\Omega)}\int_\Omega \vart_\ve^n\wedge B_0 \leq C|\vart_\ve^n|\leq \frac Cn,
$$
but this is impossible, for $n$ large enough, in view of \eqref{vort1}.

\smallskip
Hence
$$
GL_\ve(u_1^\ve,A_1^\ve)=h_{\ex}^2J(A_0)+C_0-2\pi C_1\|B_0\|_*K+o(1).
$$
By choosing $K_1\colonequals (2\pi C_1\|B_0\|_*)^{-1}C_0+1$, we deduce that, for any $K\geq K_1$ 
$$
GL_\ve(u_1^\ve,A_1^\ve)\leq h_{\ex}^2J(A_0)-1+o(1)<GL_\ve(u_\ve,A_\ve).
$$
This contradicts the fact that $(u_\ve,A_\ve)$ globally minimizes $GL_\ve$. Thus
$$
H_{c_1}\leq H_{c_1}^0+K_1.
$$
\end{proof}
The proof of the upper bound hints that, slightly above the first critical field, vortices will be located close to Lipschitz curves in $X$ for which $\|B_0\|_*$ is (almost) achieved. Let us now study this quantity in a special case.
\begin{proposition}\label{B_0ball}
Consider the special case $\Omega=B(0,R)$ and $H_{0,\ex}=\hat z$ in $B(0,R)$. Then, if $S_1$ denotes the vertical diameter seen as a $1$-current with multiplicity $1$ and oriented in the direction of positive $z$ axis, we have
$$
\|B_0\|_*=\frac1{2R}\int_\Omega S_1\wedge B_0=\frac1{2R}\int_{-R}^R B_0(0,0,z) \cdot \hat z dz=\frac32\left(1-\frac1{\sinh R}\int_0^R\frac{\sinh r}r dr\right).
$$
Moreover, $S_1$ is the only curve in $X$ achieving the maximum in \eqref{B_0*}.
\end{proposition}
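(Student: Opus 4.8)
The plan is to exploit the rotational symmetry of the problem to reduce the PDE \eqref{eqB_0} to an ODE, solve it explicitly, and then carry out the maximization in \eqref{B_0*} by comparing an arbitrary curve in $X$ with the vertical diameter. By uniqueness in \eqref{eqB_0} and the invariance of the data $H_{0,\ex}=\hat z$ under rotations about the $z$-axis, $B_0$ must itself be axisymmetric; writing it in cylindrical coordinates and using $\diver B_0=0$ together with $B_0\times\nu=0$ on $\partial B(0,R)$, one checks that $B_0 = f(\rho,z)\,\hat z + (\text{something forced to vanish})$, so in fact $B_0 = u(r)\hat z$ for a scalar radial function $u$ of $r=|x|$. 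Plugging this ansatz into $-\Delta B_0 + B_0 = \hat z$ gives the ODE $-u'' - \frac{2}{r}u' + u = 1$ on $(0,R)$, i.e. $(r u)'' = r(u-1)$, whose bounded-at-the-origin solution is $u(r) = 1 + c\,\frac{\sinh r}{r}$; the boundary condition $B_0\times\nu=0$ on $r=R$, which for $B_0=u(r)\hat z$ forces $u(R)=0$ (the tangential trace of $u\hat z$ vanishes only if $u=0$ there, since $\hat z$ is not normal to the sphere except at the poles), pins down $c = -1/(\sinh R)$. This yields
$$
\int_{-R}^{R} B_0(0,0,z)\cdot\hat z\,dz = \int_{-R}^{R} u(|z|)\,dz = 2\int_0^R\Bigl(1 - \frac{1}{\sinh R}\frac{\sinh r}{r}\Bigr)dr = 2R\cdot\frac32\Bigl(1-\frac1{\sinh R}\int_0^R\frac{\sinh r}{r}dr\Bigr),
$$
after dividing by $2R$, which is the asserted value; here the factor $\tfrac32$ is just $3/2$ coming from normalizing by $2R$, and I should double-check the bookkeeping so that the constant matches exactly the stated $\frac32(1 - \ldots)$.

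For the maximization, let $\Gamma\in X$ be arbitrary. If $\Gamma$ is a loop in $\Omega$, Stokes' theorem bounds $\int_\Omega\Gamma\wedge B_0 = \int_{S}\curl B_0$ over a minimal surface $S$ spanning $\Gamma$; since $\curl B_0 = \curl(u(r)\hat z) = u'(r)\,\hat r\times\hat z$ is horizontal and has modulus $|u'(r)|$, and since the area-minimizing $S$ together with the isoperimetric inequality gives $\operatorname{Area}(S)\lesssim |\Gamma|(\Omega)^2$, one gets $\int_\Omega\Gamma\wedge B_0 \leq \|\curl B_0\|_\infty |\Gamma|(\Omega)^2$, hence the ratio in \eqref{B_0*} is $O(|\Gamma|(\Omega))$, which is strictly smaller than $\|B_0\|_*$ once $\|B_0\|_*>0$ — so loops are never maximizers, and the supremum is attained (if at all) only over chords with both endpoints on $\partial\Omega$. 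For such a chord, extend it by a boundary geodesic to a loop $\tilde\Gamma$; since $B_0\times\nu=0$ on $\partial\Omega$ the added piece contributes nothing, and $\int_\Omega\Gamma\wedge B_0 = \int_{\tilde S}\curl B_0$ for a minimal surface $\tilde S$ spanning $\tilde\Gamma$. The key inequality is then
$$
\int_{\tilde S}\curl B_0 \leq \int_{\tilde S}|u'(r)| \,(\hat r\times\hat z)\cdot n_{\tilde S}\,d\mathcal H^2,
$$
and one must show this is maximized, relative to the chord length, precisely when $\Gamma$ is the vertical diameter. The cleanest route is a coarea / flux argument: since $\curl B_0$ is a divergence-free field, $\int_\Omega\Gamma\wedge B_0$ depends only on $\partial\Gamma$ (the two boundary points, up to the homologically trivial correction), so it equals $\int_{\gamma}B_0\cdot d\ell$ along \emph{any} curve $\gamma$ from one endpoint to the other — in particular along the chord itself and along a path through the center. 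Reducing to $\int_\gamma B_0\cdot d\ell = \int_\gamma u(|x|)\,dz$ (only the $dz$-component matters), and noting $u\geq 0$ on $[0,R]$ with $u$ decreasing in $r$, the ratio $\frac{1}{|\Gamma|}\int_\gamma u\,dz$ is maximized by taking the straight vertical segment through the origin: any other chord either has smaller projected length on the $z$-axis or samples $u$ at larger radii (where $u$ is smaller), and strict monotonicity of $u$ forces strict inequality unless $\Gamma=S_1$.

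The main obstacle is the last step — proving that $S_1$ is the \emph{unique} maximizer, not merely that its value is the stated number. Establishing that $B_0 = u(r)\hat z$ and solving the ODE are routine; the rigorous reduction of $\int_\Omega\Gamma\wedge B_0$ to a line integral that depends only on the endpoints (valid because $\curl B_0$ is divergence-free and $\Omega$ is simply connected, with the boundary term killed by $B_0\times\nu=0$) requires some care with the current-theoretic formalism but is standard. The genuinely delicate point is the optimization over all admissible chords: one needs that among all Lipschitz curves in $X$ with endpoints on $\partial B(0,R)$, the functional $\Gamma\mapsto \frac{1}{|\Gamma|(\Omega)}\int_\Omega\Gamma\wedge B_0$ is strictly maximized by the diameter. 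I would prove this by first using $|\Gamma|(\Omega)\geq$ (chord length) $=$ (distance between endpoints), then bounding $\int_\Omega\Gamma\wedge B_0$ above by evaluating the line integral along the straight chord, then showing the resulting function of the two endpoints on the sphere is maximized only when they are antipodal on a vertical diameter — an elementary but slightly involved calculus exercise using that $u$ is positive and strictly radially decreasing.
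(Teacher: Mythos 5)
Your proposal breaks down at its very first step. The ansatz $B_0=u(r)\hat z$ is incompatible with the constraint $\diver B_0=0$, which is part of the definition of $B_0=B_{A_0}$ (Lemma \ref{lemma:decompositionOmega}) and of the problem \eqref{eqB_0}: one has $\diver\bigl(u(r)\hat z\bigr)=u'(r)\,z/r$, so the only divergence-free fields of this form are constants, and those cannot satisfy both the PDE and the boundary condition. The true solution (the classical London solution for a ball) has nontrivial $\hat r$ and $\hat\phi$ components varying with the polar angle, and this is precisely where the factor $\tfrac32$ comes from; your computation, if carried through, yields $1-\frac{1}{\sinh R}\int_0^R\frac{\sinh r}{r}\,dr$ rather than $\frac32\bigl(1-\frac{1}{\sinh R}\int_0^R\frac{\sinh r}{r}\,dr\bigr)$, so the discrepancy is not "bookkeeping" but a wrong field.

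The maximization argument has an independent and equally fatal flaw: you assert that $\int_\Omega\Gamma\wedge B_0=\int_\Gamma B_0\cdot d\ell$ depends only on the endpoints of $\Gamma$ because $\curl B_0$ is divergence-free. Path-independence of a line integral requires the integrand to be curl-free, not its curl to be divergence-free; here $\curlcurl B_0=-\Delta B_0=H_{0,\ex}-B_0\neq0$, and indeed $\curl B_0\cdot\hat y=\frac{3R}{2\sinh R}\bigl(\cosh r-\frac{\sinh r}{r}\bigr)\frac{\sin\phi}{r}>0$ off the axis. So two curves with the same endpoints generally give different values, and the strategy of "bound the numerator by its value on the straight chord, bound the denominator below by the chord length" collapses. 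Your exclusion of loops is also incomplete: the bound (ratio) $\leq C\,|\Gamma|(\Omega)$ only rules out short loops, not loops of order-one length. The correct route, which is what the actual proof does, is quite different: project the curve azimuthally onto a planar half-disk (which can only decrease length while preserving $\int\Gamma\wedge B_0$), apply Stokes' theorem using the explicit sign of $\curl B_0\cdot\hat y$, dominate the resulting flux by that of an angular sector, and compare with the chord length via the law of cosines; loops are then handled by the strict monotonicity of $r_0\mapsto\frac{1}{\sinh r_0}\int_0^{r_0}\frac{\sinh r}{r}\,dr$. None of these ingredients is present in your outline, so both the value of $\|B_0\|_*$ and the uniqueness of the maximizer remain unproved.
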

\begin{proof}
We follow some ideas from \cite{AlaBroMon}. 

\medskip\noindent
{\bf Step 1. Explicit computation of $B_0$.}
When $\Omega=B(0,R)$ and $H_{0,\ex}=\hat z$ in $B(0,R)$, the solution to \eqref{eqB_0} can be explicitly computed (see \cite{Lon}). By using spherical coordinates $(r,\theta,\phi)$, where $r$ is the Euclidean distance from the origin, $\theta$ is the azimuthal angle, and $\phi$ is the polar angle, we have
$$
B_0=-\frac{3R}{r^2\sinh R}\left(\cosh r-\frac{\sinh r}r\right)\cos\phi \hat r-\frac{3R}{2r^2\sinh R}\left(\cosh r-\frac{1+r^2}r \sinh r\right)\sin \phi \hat \phi-c\hat z,
$$
where $c=\dfrac3{2R\sinh R}\left(\cosh R-\dfrac{1+R^2}R \sinh R\right)$. In particular, we observe that $B_0$ does not depend on the azimuthal angle and therefore it is constant along $\hat \theta$. 

\medskip\noindent
{\bf Step 2. Dimension reduction.}
Let $\Gamma\in X$ with $\int_{B(0,R)} \Gamma\wedge B_0>0$. We will project it along the azimuthal angle onto $B(0,R)^{2\D,+}\colonequals \{(x,z) \in \R^2 \ | \ x^2+z^2<R^2,\ x\geq 0\}$. For this, we consider the map $q:B(0,R)\subset \R^3\to B(0,R)^{2\D,+}$ defined by
$$
q(r,\theta,\phi)=(r\sin\phi,r\cos\phi),
$$
and we let 
$$
\Gamma_{2\D}\colonequals \Gamma\circ q.
$$
It is easy to check that $\partial \Gamma_{2\D}=0$ relative to $B(0,R)^{2\D}$,
$$
\int_{B(0,R)} \Gamma \wedge B_0=\int_{B(0,R)^{2\D,+}} \Gamma_{2\D}\wedge B_0,\quad \mbox{and} \quad |\Gamma_{2\D}|(B(0,R)^{2\D,+})\leq |\Gamma|(B(0,R)).
$$
Therefore
$$
\frac1{|\Gamma|(\Omega)}\int_{B(0,R)}\Gamma\wedge B_0\leq \frac1{|\Gamma_{2\D}|(\Omega)}\int_{B(0,R)^{2\D,+}}\Gamma_{2\D}\wedge B_0.
$$
Even though $\Gamma_{2\D}$ does not necessarily belong to $X$, we can decompose
$$
\Gamma_{2\D}=\sum_{i\in I}\Gamma_i,
$$
where the sum is understood in the sense of currents, $I$ is a finite set of indices, and $\Gamma_i\in X$ for all $i\in I$. In particular,
$$
\int_{B(0,R)^{2\D,+}}\Gamma_{2\D}\wedge B_0\leq \sum_{i\in I} |\Gamma_i|(B(0,R)^{2\D,+})\|B_0\|_* =|\Gamma_{2\D}|(B(0,R)^{2\D,+})\|B_0\|_*.
$$
We deduce that in order to compute $\|B_0\|_*$ it is enough to consider Lipschitz curves $\Gamma \in X$ contained in $B(0,R)^{2\D,+}$ with $\int_{B(0,R)} \Gamma\wedge B_0>0$. From now on we consider $\Gamma$ of this form.

\medskip\noindent
{\bf Step 3. Application of Stokes' theorem.}
If $\Gamma$ has both endpoints on $\partial B(0,R)\cap \partial B(0,R)^{2\D,+}$, we then define $\tilde \Gamma$ as the loop formed by the union of $\Gamma$ and the curve lying on $\partial B(0,R)\cap \partial B(0,R)^{2\D,+}$ which connects the end points of $\Gamma$ oriented accordingly to the orientation of $\Gamma$. Since $B_0\times \nu =0$ on $\partial B(0,R)$, the Stokes' theorem yields
\begin{equation}\label{stokes1}
\int_{B(0,R)^{2\D,+}}\Gamma\wedge B_0=\int_{B(0,R)^{2\D,+}}\tilde \Gamma\wedge B_0=\int_{S_\Gamma} \curl B_0\cdot \hat y,
\end{equation}
where $S_\Gamma$ is the surface enclosed by $\tilde \Gamma$. Of course if $\Gamma$ is a loop contained in $B(0,R)^{2\D,+}$ then the Stokes' theorem gives
$$
\int_{B(0,R)^{2\D,+}}\Gamma\wedge B_0=\int_{S_\Gamma} \curl B_0\cdot \hat y,
$$ 
where $S_\Gamma$ is the surface enclosed by $\Gamma$. 

An explicit computation gives
\begin{equation}\label{curl}
\curl B_0\cdot \hat y=\frac{3R}{2\sinh R}\left(\cosh r-\frac{\sinh r}r \right)\frac{\sin \phi}r\geq 0\quad \mathrm{in}\ B(0,R)^{2\D,+}.
\end{equation}
In what follows we use the notation
$$
f(r)\colonequals \frac{3R}{2\sinh R}\left(\cosh r-\frac{\sinh r}r \right).
$$

\medskip\noindent
{\bf Step 4. Estimate for curves with endpoints on  $\partial B(0,R)\cap \partial B(0,R)^{2\D,+}$.} For $a,b\in[0,\pi]$ with $a<\pi-b$ let us define
$$
S_{a,b}\colonequals \{(r,\phi)\ | \ 0\leq r\leq R,\ a\leq \phi \leq \pi-b \}.
$$ 
We let $\phi_1,\phi_2$ be the maximum angles for which $S_\Gamma \subset S_{\phi_1,\phi_2}$. From \eqref{stokes1} and \eqref{curl}, we deduce that 
\begin{align*}
\int_{B(0,R)^{2\D,+}}\Gamma\wedge B_0 \leq \int_{S_{\phi_1,\phi_2}}\curl B_0\cdot \hat y &=\int_0^R \int_{\phi_1}^{\pi-\phi_2}f(r)\sin\phi d\phi dr\\
&=(\cos \phi_1 +\cos \phi_2)\int_0^R f(r)dr
\end{align*}
On the other hand, by definition of $\phi_1,\phi_2$, $S_\Gamma$ intersects the rays $\{(r,\phi_1) \ | \ 0\leq r\leq R \}$ and $\{(r,\phi_2) \ | \ 0\leq r\leq R \}$.
Since the endpoints of $\Gamma$ belong to $\partial B(0,R)\cap \partial B(0,R)^{2\D,+}$, a simple geometric argument shows that 
$$
|\Gamma|(B(0,R)^{2\D,+})\geq d((R,\phi_1),(R,\phi_2)).
$$
The law of cosines yields $d((R,\phi_1),(R,\phi_2))=R\sqrt{2(1-\cos(\pi-\phi_1-\phi_2))}$. Hence
$$
\frac1{|\Gamma|(B(0,R)^{2\D,+})}\int_{B(0,R)^{2\D,+}}\Gamma\wedge B_0\leq \frac{\cos \phi_1+\cos \phi_2}{\sqrt{2(1-\cos(\pi-\phi_1-\phi_2))}}\frac{\int_0^R f(r)dr}R.
$$
We now estimate the right-hand side of this inequality. Let us observe that
$$
\cos\phi_1+\cos\phi_2=2\cos\left(\frac{\phi_1+\phi_2}2\right) \cos\left(\frac{\phi_1-\phi_2}2\right)
$$
and
$$
\cos(\pi-\phi_1-\phi_2)=
\cos(\phi_1+\phi_2)=\cos^2\left(\frac{\phi_1+\phi_2}2\right)-\sin^2\left(\frac{\phi_1+\phi_2}2\right)=2\cos^2\left(\frac{\phi_1+\phi_2}2\right)-1.
$$
Using $0\leq\frac{\phi_1+\phi_2}2<\frac\pi2$, we deduce that
$$
\frac{\cos\phi_1+\cos\phi_2}{\sqrt{2(1-\cos(\pi-\phi_1-\phi_2))}}= \cos\left(\frac{\phi_1-\phi_2}2\right)\leq 1,
$$
with equality if and only if $\phi_1=\phi_2$. Therefore
\begin{align*}
\frac1{|\Gamma|(B(0,R)^{2\D,+})}\int_{B(0,R)^{2\D,+}}\Gamma\wedge B_0\leq\frac{\int_0^R f(r)dr}R&=\frac32\left(1-\frac1{\sinh R}\int_0^R\frac{\sinh r}r dr\right)\\
&=\frac1{2R}\int_{B(0,R)} S_1\wedge B_0.
\end{align*}
Besides, from the previous computations we easily deduce that the inequality is strict if $\Gamma \neq S_1$.

\medskip\noindent
{\bf Step 5. Estimate for loops in $B(0,R)^{2\D,+}$.}
Let us define $0<r_0<R$ as the minimum radius such that 
$$
S_\Gamma \subset B(0,r_0)^{2\D,+}.
$$
In particular, $S_\Gamma\cap (\partial B(0,r_0)\cap \partial B(0,r_0)^{2\D,+})\neq \emptyset$. We can then use the estimate provided in the previous step and conclude that
$$
\frac1{|\Gamma|(B(0,R)^{2\D,+})}\int_{B(0,R)^{2\D,+}}\Gamma\wedge B_0\leq \frac32\left(1-\frac1{\sinh r_0}\int_0^{r_0}\frac{\sinh r}r dr\right).
$$
One can check that the function $R\to \frac1{\sinh R}\int_0^R\frac{\sinh r}r dr$ is strictly decreasing in $[0,\infty)$ and therefore 
$$
\frac1{|\Gamma|(B(0,R)^{2\D,+})}\int_{B(0,R)^{2\D,+}}\Gamma\wedge B_0< \frac32\left(1-\frac1{\sinh R}\int_0^{R}\frac{\sinh r}r dr\right).
$$
This concludes the proof of the proposition.
\end{proof}  

\section{A Meissner-type solution beyond the first critical field}\label{sec:Meissner}
In this section, we present the proof of Theorem \ref{theorem:meissner}.
\begin{proof} 
{\bf Step 1. Existence of a locally minimizing vortexless configuration.}
Let us introduce the set
$$
U=\left\{(u,A)\in H^1(\Omega,\C)\times [A_{\ex}+H_{\curl}] \ | \ F_\ve(u',A')< \ve^\frac23 \right\},
$$
where $u'=u_0^{-1}u$ and $A'=A-h_{\ex}A_0$.
Consider a minimizing sequence $\{(\tilde u_n,\tilde A_n)\}_n\in U$. Lemma \ref{lemma:decompositionR3} yields a gauge transformed sequence $\{u_n,A_n\}_n\in [A_{\ex}+\dot H^1_{\diver=0}]$, that in particular satisfies $F_\ve(u_n',A_n')=F_\ve(\tilde u_n',\tilde A_n')<\ve^\frac23$. In addition, by gauge invariance, we can take this sequence to satisfy $A_n'\cdot \nu=0$ on $\partial\Omega$. Then arguing as in Proposition \ref{prop:minimization}, we deduce that (up to subsequence) $\{(u_n,A_n-A_{\ex})\}_n$ converges to some $(u,A-A_{\ex})$ weakly in $H^1(\Omega,\C)\times \dot H^1_{\diver=0}$. Arguing again as in Proposition \ref{prop:minimization}, we find
$$
F_\ve(u',A')\leq \liminf_n F_\ve(u_n',A_n') \quad \mathrm{and} \quad GL_\ve(u,A)\leq \liminf_n GL_\ve(u_n,A_n).
$$
Hence $(u,A)\in \overline U\cap H^1(\Omega,\C)\times [A_{\ex}+ \dot H_{\diver=0}^1]$ satisfies $A'\cdot \nu=0$ on $\partial\Omega$ and minimizes $GL_\ve$ over $\overline U$.

Let us now prove that $(u,A)\in U$. We consider, for $\de=\de(\ve)=c_1\ve^\frac13$ and $\ve$ sufficiently small, the grid $\GG(b_\ve,R_0,\de)$ associated to $(u',A')$ by \cite{Rom}*{Lemma 2.1}\footnote{Though it is not explicitly written in the paper, if $F_\ve(u',A')\leq \ve^\gamma$ with $\gamma\in (0,1)$ then \cite{Rom}*{Lemma 2.1} holds for any $c_1\ve^\frac{1+\gamma}4\leq \de \leq \de_0$. The proof of this is exactly the same.}. In particular, using the same notation as in this lemma, we have
\begin{align*}
|u_\ve|>5/8\quad \mathrm{on}\ &\RR_1(\GG(b_\ve,R_0,\delta)),\\
I_\ve^1\colonequals\int\limits_{\RR_1(\GG(b_\ve,R_0,\delta))}e_\ve(u',A')d\H^1&\leq  C \de^{-2}F_\ve(u',A')\leq C\ve^{-\frac23}\ve^{\frac23},\\
I_\ve^2\colonequals\int\limits_{\RR_2(\GG(b_\ve,R_0,\delta))}e_\ve(u',A')d\H^2&\leq  C \de^{-1}F_\ve(u',A')\leq C\ve^{-\frac13}\ve^{\frac23}.
\end{align*}
Then, the 2D ball construction method (see \cite{SanSerBook}*{Theorem 4.1}) applied in the 2D skeleton $\RR_2(\GG(b_\ve,R_0,\de))$ of the grid implies that for each face $\omega$ of a cube of the grid, the collection of connected components $S_{i,\omega}$ of $\{x\in\omega \ | \ |u(x)|<1/2\}$ whose degree is different from zero is empty (see \cite{Rom}*{Section 4}). We thus deduce that the $1$-current $\nu_\ve'$, which approximates well the vorticity $\mu(u',A')$, vanishes identically in $\overline\Omega$. Then, from the proof of Theorem \ref{theorem:epslevel} (see \cite{Rom}*{Section 8}), we find
$$
\|\mu(u',A')\|_{C_T^{0,1}(\Omega)^*}\leq C\de F_\ve(u',A')+C\ve(1+I_\ve^1+I_\ve^2)\leq C(\de +\ve \de^{-2})F_\ve(u',A').
$$
Let us now use Proposition \ref{prop:energysplitting}. From the previous inequality and since $\alpha<\frac13$, we have
\begin{equation}\label{condalpha}
\left| h_{\ex}\int_\Omega \mu(u',A')\wedge B_0\right|\leq Ch_{\ex}(\de +\ve \de^{-2})F_\ve(u',A')\leq C\ve^{\frac13-\alpha}F_\ve(u',A')=o(\ve^\frac23).
\end{equation}
On the other hand
$$
R_0\leq C\ve h_{\ex}^2E_\ve(|u'|)^\frac12\leq C\ve h_{\ex}^2F_\ve(u',A')^\frac12\leq C\ve^{1-2\alpha}\ve^\frac13=o(\ve^\frac23).
$$
The energy-splitting \eqref{Energy-Splitting} then yields 
$$
GL_\ve(u,A)=h_{\ex}^2J(A_0)+F_\ve(u',A')+\frac12\int_{\R^3\setminus \Omega} |\curl A'|^2+o(\ve^\frac23).
$$
But, since $(u_0,h_{\ex}A_0)$ belongs to $U$, we have
$$
GL_\ve(u,A)\leq GL_\ve(u_0,h_{\ex}A_0)=h_{\ex}^2J(A_0).
$$
We thus deduce that
\begin{equation}\label{energyu'A'}
F_\ve(u',A')+\frac12 \int_{\R^3\setminus\Omega}|\curl A'|^2=o(\ve^\frac23),
\end{equation}
and therefore $(u,A)\in U$ provided $\ve$ is small enough. 

\smallskip
Now, since $U$ is open in $H^1(\Omega,\C)\times [A_{\ex}+H_{\curl}]$, the minimizer $(u,A)$ must be a critical point of $GL_\ve$ and therefore satisfies the Ginzburg-Landau equations \eqref{GLeq}. Arguing as in the proof of Theorem \ref{theorem:belowHc1}, we deduce that $(u,A)$ is a vortexless configuration and
$$
\|1-|u|\|_{L^\infty(\Omega,\C)}=\|1-|u'|\|_{L^\infty(\Omega,\C)}=o(1)\quad \mathrm{as}\ \ve\to 0.
$$
We note that we have omitted in our notation the dependence on $\ve$ of the minimizer $(u,A)$.

\medskip\noindent
{\bf Step 2. Characterization of $(u',A')$.}
Since $A'\cdot \nu =0$ on $\partial\Omega$, we have
\begin{equation}\label{estimateu'A'}
\int_\Omega |\nabla u'|^2\leq \int_\Omega |\nabla_{A'} u'|^2+|A'|^2|u'|^2\leq CF_\ve(u',A')\leq C\ve^\frac23
\end{equation}
for some universal constant $C>0$.

Observe that, using the Poincar\'e-Wirtinger inequality, we have
\begin{equation}\label{meanvalue}
\int_\Omega |u'-\underline u'|^2\leq C\int_\Omega |\nabla u'|^2,\quad \mathrm{where}\ \underline u'=\frac1{|\Omega|}\int_\Omega u'.
\end{equation}
In addition, we have
$$
\int_\Omega \big| |u'|-|\underline u'|\big|^2\leq \int_\Omega |u'-\underline u'|^2
$$
and 
$$
\int_\Omega (1-|u'|)^2\leq \int_\Omega (1-|u'|^2)^2\leq 4\ve^2 F_\ve(u',A')\leq 4\ve^{2+\frac23}.
$$
We deduce that $\|1-|\underline u'|\|_{L^2(\Omega,\C)}\leq C\ve^{1+\frac13}$. But $\underline u'$ is a constant, thus $\underline u'=e^{i\theta_\ve}+O(\ve^{1+\frac13})$ for some $\theta_\ve\in[0,2\pi]$. By combining with \eqref{meanvalue} and \eqref{estimateu'A'}, we find
\begin{equation}\label{u'L^2}
\int_\Omega |u'-e^{i\theta_\ve}|^2\leq C\ve^\frac23.
\end{equation}
Thus
\begin{equation}\label{estimateu'}
\inf_{\theta\in[0,2\pi]} \|u'-e^{i\theta}\|_{H^1(\Omega,\C)}\to 0\quad \mathrm{as}\ \ve\to 0.
\end{equation}
On the other hand, from \eqref{energyu'A'}, we have $\|\curl A'\|^2_{L^2(\R^3,\R^3)}=o(\ve^\frac23)$, which combined with the fact that $A'=A-h_{\ex}A_0\in \dot H^1_{\diver=0}$ implies that $\|A'\|_{\dot H^1_{\diver=0}}^2=o(\ve^\frac23)$.

In particular, by noting that $(e^{i\theta},h_{\ex}\curl B_0)$ is gauge equivalent to $(1,h_{\ex} \curl B_0)$ in $\Omega$ for any $\theta\in[0,2\pi]$, we deduce that (up a gauge transformation) the configuration $(u',A'+h_{\ex}\curl B_0)$, which is gauge equivalent to $(u,A)$ in $\Omega$, gets closer and closer in the $H^1(\Omega,\C)\times H^1(\Omega,\R^3)$-norm to $(1,h_{\ex}\curl B_0)$.

\medskip\noindent
{\bf Step 3. $(u,A)$ globally approaches $(u_0,h_{\ex}A_0)$.}
Observe that, for any $\theta \in [0,2\pi]$, we have
$$
\int_\Omega |u-e^{i\theta}u_0|^2=\int_\Omega |u'u_0-e^{i\theta}u_0|^2=\int_\Omega |u'-e^{i\theta}|^2
$$
and
$$
\int_\Omega |\nabla (u-e^{i\theta}u_0)|^2\leq \int_\Omega |\nabla u_0|^2|u'-e^{i\theta}|^2+\int_\Omega |\nabla u'|^2.
$$
From \eqref{estimateu'}, we deduce that 
$$
\inf_{\theta\in[0,2\pi]} \|u-e^{i\theta}u_0\|_{L^2(\Omega,\C)}\to 0\quad \mathrm{as}\ \ve \to 0.
$$
Recall that $u_0=e^{ih_{\ex}\phi_0}$ and that $A_0$ satisfies the Euler-Lagrange equation \eqref{EulerLagrangeA0}. Since $\curl(H_0-H_{0,\ex})=\curlcurl (A_0-A_{0,\ex})=-\Delta(A_0-A_{0,\ex})$, standard elliptic regularity theory implies that $\phi_0=A_0-\curl B_0\in L^\infty(\Omega)$. Therefore
$$
\int_\Omega |\nabla u_0|^2|u'-e^{i\theta}|^2\leq h_{\ex}^2\|\nabla \phi_0\|_{L^\infty(\Omega)}^2 \|u'-e^{i\theta}\|_{L^2(\Omega,\C)}^2.
$$
This combined with \eqref{u'L^2} for $\theta=\theta_\ve$, yields
$$
\int_\Omega |\nabla u_0|^2|u'-e^{i\theta_\ve}|^2\leq C\ve^{-2\alpha}\ve^\frac23.
$$
Since $\alpha<\frac13$, the right-hand side converges to $0$ as $\ve\to0$. Using once again \eqref{estimateu'}, we obtain
$$
\inf_{\theta\in[0,2\pi]}\int_\Omega |\nabla (u-e^{i\theta}u_0)|^2\to 0\quad \mathrm{as}\ \ve\to 0.
$$
Hence
$$
\inf_{\theta\in[0,2\pi]} \|u-e^{i\theta}u_0\|_{H^1(\Omega,\C)}\to 0\quad \mathrm{as}\ \ve\to 0.
$$
Moreover, we have
$$
\|A-h_{\ex}A_0\|_{\dot H^1_{\diver=0}}=\|A'\|_{\dot H^1_{\diver=0}}\to 0\quad \mathrm{as}\ \ve \to 0.
$$

\medskip
We have hence shown that, up to a gauge transformation in $\R^3$, the solution $(u,A)$ gets closer and closer in the $H^1(\Omega,\C)\times \dot H^1_{\diver=0}$-norm to $(u_0,h_{\ex}A_0)$. In addition, up to a (different) gauge transformation in $\Omega$, the solution approaches in the $H^1(\Omega,\C)\times H^1(\Omega,\R^3)$-norm the configuration $(1,h_{\ex}\curl B_0)$.
\end{proof}

\begin{remark}\label{remark:condalpha}
The assumption $h_{\ex}\leq \ve^{-\alpha}$ for $\alpha<\frac13$ is needed to prove that 
$$
\left| h_{\ex}\int_\Omega \mu(u',A')\wedge B_0\right|\leq o(F_\ve(u',A'));
$$
see \eqref{condalpha}. If $\alpha\geq \frac13$, we are not able to show this, and our strategy to prove that $(u,A)\in U$ then fails.
\end{remark}

\section{Uniqueness of locally minimizing vortexless configurations}\label{sec:Uniqueness}
In this section we prove Theorem \ref{theorem:uniqueness}. We follow the same strategy as in \cite{Ser2}*{Section 2}.
\begin{proof}
Since any pair $(\tilde u,\tilde A)\in H^1(\Omega,\C)\times [A_{\ex}+ H_{\curl}]$ is gauge equivalent to $(u,A)\in H^1(\Omega,\C)\times H^1_{\diver=0}$ with $A_j'\cdot \nu=0$ on $\partial\Omega$, it is enough to prove the theorem in this gauge.

Let us assume towards a contradiction that there are two distinct locally minimizing vortexless solutions $(u_j,A_j)=(u_0u_j',h_{\ex}A_0+A_j')$ to \eqref{GLeq} with $u_j\in H^1(\Omega,\C)$, $|u_j|\geq c$ for some $c\in(0,1)$, $A_j'\in H^1_{\diver=0}$, $A_j'\cdot \nu=0$ on $\partial\Omega$, and
$$
F_\ve(u_j',A_j')\leq C\ve^{1+\de} \quad \mathrm{for}\ j=1,2,
$$
for some $\de>0$. As we shall see, this estimate is crucial to prove the theorem.

Since $|u_j'|=|u_j|\geq c>0$, we can write $u_j'=\eta_je^{i\phi_j}$ in $\Omega$ for $j=1,2$. Note that the functions $\phi_0,\phi_1,\phi_2\in H^2(\Omega)$ can be extended to functions in $H^2(\R^3)$. Therefore, for $j=1,2$, $(u_j,A_j)$ is gauge equivalent to $(\eta_j,\tilde A_j)$ with
$$
\tilde A_j=h_{\ex}(A_0-\nabla \phi_0)+A_j'-\nabla \phi_j.
$$

\medskip\noindent
{\bf Step 1. Estimating $\|\tilde A_j\|_{L^\infty(\Omega,\R^3)}$.}
Let us show that, for $j=1,2$, we have
\begin{equation}\label{estimateA_j}
\|\tilde A_j\|_{L^\infty(\Omega,\R^3)}\leq o(\ve^{-1}).
\end{equation}
By gauge equivalence, $(u_j',h_{\ex}(A_0-\nabla \phi_0)+A_j')$ solves \eqref{GLeq}. Then, by standard elliptic regularity theory for solutions of the Ginzburg-Landau equations in the Coulomb gauge, we have
$$
\|h_{\ex}A_0\curl B_0+A_j'\|_{L^\infty(\Omega,\R^3)}\leq Ch_{\ex}\quad \mathrm{and}\quad \|\nabla u_j'\|_{L^\infty(\Omega,\R^3)}\leq C\ve^{-1}.
$$
Since
\begin{equation}\label{estimategrad}
\|\nabla \eta_j\|_{L^\infty(\Omega,\R^3)}+\|\nabla \phi_j\|_{L^\infty(\Omega,\R^3)}\leq 2\|\nabla u_j'\|_{L^\infty(\Omega,\R^3)},
\end{equation}
and $h_{\ex}=o(\ve^{-1})$, we find
\begin{equation}\label{firstestimate}
\|\tilde A_j\|_{L^\infty(\Omega,\R^3)}\leq \|h_{\ex}\curl B_0+A_j'\|_{L^\infty(\Omega,\R^3)}+\|\nabla \phi_j\|_{L^\infty(\Omega,\R^3)}\leq C\ve^{-1}.
\end{equation}
We will now improve this estimate. By gauge equivalence, $(\eta_j,\tilde A_j)$ solves \eqref{GLeq}. In particular, the second Ginzburg-Landau equation in $\Omega$ reads
$$
\curl^2(\tilde A_j - A_{\ex})=-\eta_j^2\tilde A_j.
$$
This implies that $\diver(\eta_j^2\tilde A_j)=0$ in $\Omega$. In addition, the boundary condition $\nabla_{\tilde A_j}\eta_j\cdot \nu=0$ on $\partial \Omega$, implies, in particular, that $\nabla \phi_j \cdot \nu =0$ on $\partial\Omega$. Therefore, $\phi_j$ satisfies the elliptic problem
$$
\left\{ 
\begin{array}{rcll}
\Delta \phi_j&=&\dfrac2{\eta_j} \nabla \eta_j \cdot \tilde A_j&\mathrm{in}\ \Omega\\
\nabla \phi_j\cdot \nu&=&0&\mathrm{on}\ \partial\Omega. 
\end{array}
\right.
$$
Because $\eta_j\geq c>0$, we deduce that, for any $p>1$,
\begin{equation}\label{Laplacian}
\|\Delta \phi_j \|_{L^p(\Omega)}\leq C\|\tilde A_j\|_{L^\infty(\Omega,\R^3)}\|\nabla \eta_j\|_{L^p(\Omega,\R^3)}\leq C\ve^{-1}\|\nabla \eta_j\|_{L^p(\Omega,\R^3)},
\end{equation}
where the last inequality is obtained by using \eqref{firstestimate}.

On the other hand, since $A_j'\cdot\nu =0$ on $\partial \Omega$, we have 
$$
\|\nabla u_j'\|^2_{L^2(\Omega,\C^3)},\ \|A_j'\|^2_{L^2(\Omega,\R^3)}\leq C F_\ve(u_j',A_j')\leq C\ve^{1+\de}. 
$$
This implies that
$$
\int_\Omega |\nabla \eta_j|^2+\eta_j^2|\nabla \phi_j|^2=\int_\Omega |\nabla u_j'|^2\leq C\ve^{1+\de}.
$$
In addition, by interpolation, for any $p>1$, we have
$$
\|\nabla \eta_j\|_{L^p(\Omega,\R^3)}\leq C \|\nabla \eta_j\|_{L^\infty(\Omega,\R^3)}^{1-\frac2p}\|\nabla \eta_j\|_{L^2(\Omega,\R^3)}^{\frac2p}.
$$
Combining the previous two inequalities with \eqref{estimategrad}, yields
$$
\|\nabla \eta_j\|_{L^p(\Omega,\R^3)}\leq C\ve^{-1+\frac2p}\ve^\frac{1+\de}p=C\ve^\frac{3+\de-p}p.
$$
Combining with \eqref{Laplacian} for $p=3+\frac\de2>3$, we find
$$
\|\Delta \phi_j \|_{L^p(\Omega)}\leq C\ve^{-1}\ve^{\frac\de{6+\de}}=o(\ve^{-1}).
$$
By an elliptic estimate and Sobolev embedding, we then obtain
$$
\|\nabla \phi_j \|_{L^\infty(\Omega)}\leq o(\ve^{-1}).
$$
Thus
$$
\|\tilde A_j\|_{L^\infty(\Omega,\R^3)}\leq \|h_{\ex}\curl B_0+A_j'\|_{L^\infty(\Omega,\R^3)}+\|\nabla \phi_j\|_{L^\infty(\Omega,\R^3)}\leq Ch_{\ex} + o(\ve^{-1})=o(\ve^{-1}).
$$

\medskip\noindent
{\bf Step 2. Energy estimate.}
Let us prove that 
$$
Y\colonequals \frac{GL_\ve(\eta_1,\tilde A_1)+GL_\ve(\eta_2,\tilde A_2)}2-GL_\ve\left(\frac{\eta_1+\eta_2}2,\frac{\tilde A_1+\tilde A_2}2\right)>0
$$
First, observe that
$$
\int_\Omega |\nabla_{\tilde A_j} \eta_j|^2=\int_\Omega |\nabla \eta_j|^2+\eta_j^2|\tilde A_j|^2.
$$
We write $Y=Y_0+Y_1+Y_2+Y_3$ with
\begin{align*}
Y_0&=\frac12\int_\Omega |\nabla \eta_1|^2+|\nabla \eta_2|^2-\int_\Omega \left|\nabla \left(\frac{\eta_1+\eta_2}2\right)\right|^2,\\
Y_1&=\frac12\int_\Omega \eta_1^2|\tilde A_1|^2+\eta_2^2|\tilde A_2|^2-\int_\Omega\left(\frac{\eta_1+\eta_2}2\right)^2\left|\frac{\tilde A_1+\tilde A_2}2\right|^2,\\
Y_2&=\frac12\left(\frac1{4\ve^2}\int_\Omega (1-\eta_1^2)^2+(1-\eta_2^2)^2\right)-\frac1{4\ve^2}\int_\Omega \left(1-\left(\frac{\eta_1+\eta_2}2\right)^2\right)^2,\\
Y_3&=\frac12\int_\Omega |\curl \tilde A_1-H_{\ex}|^2+|\curl \tilde A_2-H_{\ex}|^2-\int_\Omega \left|\curl\left(\frac{\tilde A_1+\tilde A_2}2 \right)-H_{\ex} \right|^2.
\end{align*}
Note that, by convexity we have $Y_0,Y_3\geq 0$.

On the other hand, arguing exactly as in the proof of \cite{Ser2}*{Lemma 2.5}, we get
\begin{multline*}
Y_1=\frac1{16}\int_\Omega |\eta_1-\eta_2|^2|\tilde A_1+\tilde A_2|^2+4\eta_1^2|\tilde A_1-\tilde A_2|^2\\
-(\eta_1-\eta_2)(\tilde A_1-\tilde A_2)\cdot\left(\tilde A_1(2\eta_1+4\eta_2)+\tilde A_2(6\eta_1+8\eta_2)\right)
\end{multline*}
and
$$
Y_2\geq \frac3{64\ve^2}\int_\Omega (\eta_1-\eta_2)^2.
$$
Let us prove that $Y_1+Y_2>0$. We consider three cases.
\begin{itemize}[leftmargin=*]
\item If $\eta_1=\eta_2$ then 
$$
Y_1+Y_2\geq \int_\Omega 4\eta_1^2|\tilde A_1-\tilde A_2|^2>0.
$$
\item If $\tilde A_1=\tilde A_2$ then $Y_1\geq 0$. Therefore
$$
Y_1+Y_2\geq Y_2\geq\frac3{64\ve^2}\int_\Omega (\eta_1-\eta_2)^2>0.
$$
\item If $\eta_1\neq \eta_2$ and $\tilde A_1\neq \tilde A_2$ then
$$
Y_1\geq \frac1{16}\int_\Omega |\eta_1-\eta_2|^2|\tilde A_1+\tilde A_2|^2+4\eta_1^2\left|\tilde A_1-\tilde A_2\right|^2-|\eta_1-\eta_2||\tilde A_1-\tilde A_2|(6|\tilde A_1|+14|\tilde A_2|).
$$
By the Cauchy-Schwarz inequality, we have
\begin{multline*}
\int_\Omega|\eta_1-\eta_2||\tilde A_1-\tilde A_2|(6|\tilde A_1|+14|\tilde A_2|)\\
\leq 14(\|\tilde A_1\|_{L^\infty(\Omega,\R^3)}+\|\tilde A_2\|_{L^\infty(\Omega,\R^3)})\|\eta_1-\eta_2\|_{L^2(\Omega)}\|\tilde A_1-\tilde A_2\|_{L^2(\Omega,\R^3)},
\end{multline*}
which combined with \eqref{estimateA_j}, yields
$$
\int_\Omega|\eta_1-\eta_2||\tilde A_1-\tilde A_2|(6|\tilde A_1|+14|\tilde A_2|)\leq o(\ve^{-1})\|\eta_1-\eta_2\|_{L^2(\Omega)}\|\tilde A_1-\tilde A_2\|_{L^2(\Omega,\R^3)}.
$$
On the other hand,
$$
\int_\Omega \frac14\eta_1^2|\tilde A_1-\tilde A_2|^2+\frac3{64\ve^2}(\eta_1-\eta_2)^2 \geq \frac{9}{32\ve}\|\eta_1-\eta_2\|_{L^2(\Omega)}\|\tilde A_1-\tilde A_2\|_{L^2(\Omega,\R^3)}.
$$
Hence, if $\ve$ is small enough then $Y_1+Y_2>0$.
\end{itemize}
We have thus proved that $Y>0$.

\medskip\noindent
{\bf Step 3. Contradiction.}
Assume without loss of generality that 
$$
GL_\ve(\eta_1,\tilde A_1)\leq GL_\ve(\eta_2,\tilde A_2).
$$
From the previous step, we have
$$
GL_\ve\left(\frac{\eta_1+\eta_2}2,\frac{\tilde A_1+\tilde A_2}2\right)<\frac{GL_\ve(\eta_1,\tilde A_1)+GL_\ve(\eta_2,\tilde A_2)}2\leq GL_\ve(\eta_2,\tilde A_2).
$$
A standard argument then shows that, for any $t\in(0,1)$, 
$$
GL_\ve\left(t\eta_1+(1-t)\eta_2,t\tilde A_1+(1-t)\tilde A_2\right)<GL_\ve(\eta_2,\tilde A_2),
$$
contradicting the fact that $(\eta_2,\tilde A_2)$ is a local minimizer of the energy. Hence $(\eta_1,\tilde A_1)=(\eta_2,\tilde A_2)$. This concludes the proof.
\end{proof}

\appendix
\section{Improved estimates for locally minimizing vortexless configurations}\label{sec:appendix}
\begin{proposition}\label{prop:vort}
Let $(u,A)\in H^1(\Omega,\C)\times H^1(\Omega,\R^3)$ with $u$ continuous and $|u|\geq c$ for some $c\in(0,1)$. Then
$$
\|\mu(u,A)\|_{C_T^{0,1}(\Omega,\R^3)^*}\leq C\ve F_\ve(u,A).
$$
\end{proposition}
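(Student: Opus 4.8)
The plan is to reduce the estimate to a single integration by parts, after rewriting the vorticity in a form that makes the factor $1-|u|^2$ explicit. Since $u$ is continuous, $|u|\geq c>0$, and $\Omega$ is simply connected, I would first lift $u=\rho e^{i\varphi}$ in $\Omega$ with $\rho=|u|\geq c$ and $\varphi\in H^1(\Omega)$. Then $j(u,A)=(iu,\nabla_A u)=\rho^2(\nabla\varphi-A)$, and, since $\curl\nabla\varphi=0$, one obtains the identity
$$
\mu(u,A)=\curl\big(j(u,A)+A\big)=\curl\big((\rho^2-1)(\nabla\varphi-A)\big)\qquad\mathrm{in}\ \Omega;
$$
equivalently, in lifting-free form, $\mu(u,A)=\curl\big((1-|u|^{-2})\,j(u,A)\big)$, which is convenient because $1-|u|^{-2}$ is bounded on $\{|u|\geq c\}$. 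These manipulations are the usual ones from Ginzburg--Landau theory and are justified for $H^1$ configurations by a standard density argument.

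Next I would pair against a test $1$-form $\Phi\in C_T^{0,1}(\Omega)$. Writing $\omega=(\rho^2-1)(\nabla\varphi-A)$ and identifying it with the corresponding vector field, the identity $\diver(\omega\times\Phi)=\curl\omega\cdot\Phi-\omega\cdot\curl\Phi$ together with the divergence theorem gives
$$
\mu(u,A)(\Phi)=\int_\Omega\curl\omega\cdot\Phi=\int_{\partial\Omega}(\omega\times\Phi)\cdot\nu+\int_\Omega\omega\cdot\curl\Phi .
$$
The boundary term vanishes, since $(\omega\times\Phi)\cdot\nu=\omega\cdot(\Phi\times\nu)=0$ on $\partial\Omega$ because $\Phi\in C_T^{0,1}(\Omega)$ means precisely $\Phi\times\nu=0$ there. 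Hence $\mu(u,A)(\Phi)=\int_\Omega\omega\cdot\curl\Phi$, and since $\|\curl\Phi\|_{L^\infty(\Omega)}\leq C\|\Phi\|_{C^{0,1}(\Omega)}$ it suffices to bound $\int_\Omega|\omega|$.

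For the last step I would estimate pointwise $|\omega|=|\rho^2-1|\,|\nabla\varphi-A|\leq c^{-1}\,|1-|u|^2|\,|\nabla_A u|$, using $\rho\geq c$ and $\rho\,|\nabla\varphi-A|\leq|\nabla_A u|$. Cauchy--Schwarz then gives $\int_\Omega|\omega|\leq c^{-1}\|1-|u|^2\|_{L^2(\Omega)}\,\|\nabla_A u\|_{L^2(\Omega)}$, and since $\tfrac1{2\ve^2}\int_\Omega(1-|u|^2)^2\leq F_\ve(u,A)$ and $\tfrac12\int_\Omega|\nabla_A u|^2\leq F_\ve(u,A)$, this is at most $2c^{-1}\ve F_\ve(u,A)$. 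Combining the three displays yields $|\mu(u,A)(\Phi)|\leq C\ve F_\ve(u,A)\|\Phi\|_{C^{0,1}(\Omega)}$, which is the claim, with $C$ depending only on $c$ and on the universal constant relating $\|\curl\Phi\|_{L^\infty}$ to $\|\Phi\|_{C^{0,1}}$. The only delicate point is the rigorous justification of the identity for $\mu(u,A)$ and of the integration by parts at the $H^1$ level of regularity --- the $1$-form $\omega$ lies a priori only in $L^{3/2}(\Omega)$ --- which I would handle by approximating $(u,A)$ by smooth configurations with $|u|$ still bounded below; the rest is a routine Cauchy--Schwarz estimate.
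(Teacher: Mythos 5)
Your argument is correct and is essentially the paper's own proof: the same lifting $u=\rho e^{i\varphi}$, the same rewriting of $j(u,A)+A$ so that the factor $1-|u|^2$ appears and the pure-gradient part drops out against $\curl\Phi$ (using $\Phi\times\nu=0$ to kill the boundary term), and the same Cauchy--Schwarz estimate pairing $\|1-|u|^2\|_{L^2}\leq C\ve F_\ve^{1/2}$ with $\rho|\nabla\varphi-A|\leq|\nabla_A u|$. Your additional remark about justifying the lifting and the integration by parts at the $H^1$ level by density is a reasonable way to handle a point the paper passes over silently.
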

\begin{proof}
Let $\varphi \in C_T^{0,1}(\Omega,\R^3)$. By integration by parts, we have
$$
\int_\Omega \mu(u,A)\wedge \varphi =\int_\Omega (j(u,A)+A)\cdot \curl \varphi. 
$$
Since $|u|\geq c>0$, we can write $u=|u|e^{i\phi}$. A straightforward computation, shows that
$$
j(u,A)+A=|u|^2\nabla \phi +(1-|u|^2)A=(1-|u|^2)(A-\nabla \phi)+\nabla \phi.
$$
Observe that, by integration by parts, we have $\int_\Omega \nabla \phi \cdot \curl \varphi=0$. Then, from the Cauchy-Schwarz inequality, we deduce that
$$
\left|\int_\Omega (j(u,A)+A)\cdot \curl \varphi\right|\leq \int_\Omega (1-|u|^2)|A-\nabla \phi||\curl \varphi|\leq C\|\curl \varphi\|_{L^\infty(\Omega,\R^3)}\ve F_\ve(u,A).
$$
Hence
$$
\|\mu(u,A)\|_{C_T^{0,1}(\Omega,\R^3)^*}\leq C\ve F_\ve(u,A).
$$
\end{proof}
With this estimate at hand, we prove the following result.
\begin{proposition}\label{prop:estimteFree} 
Denote $(u_0,h_{\ex}A_0)$ the approximation of the Meissner solution. Let $(u,A)=(u_0u',h_{\ex}A_0+A')\in H^1(\Omega,\C)\times [A_{\ex}+H_{\curl}]$ with $u$ continuous and $|u|\geq c$ for some $c\in(0,1)$. If $h_{\ex}\leq \ve^{-\alpha}$ for some $\alpha\in \left(0,\frac14\right)$ and $GL_\ve(u,A)\leq GL_\ve(u_0,h_{\ex}A_0)$ then, for any $\ve$ sufficiently small, we have
$$
F_\ve(u',A')+\frac12 \int_{\R^3\setminus\Omega} |\curl A'|^2\leq C\ve^{1+\de}
$$
for some $\de\in(0,1)$.
\end{proposition}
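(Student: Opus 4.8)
The plan is to push the energy comparison $GL_\ve(u,A)\le GL_\ve(u_0,h_{\ex}A_0)$ through the splitting of Proposition \ref{prop:energysplitting} and then run a one‑step bootstrap on $F_\ve(u',A')$, the whole point being that both error terms in the splitting are small compared with $F_\ve(u',A')$ itself. Note first that $u'$ is continuous with $|u'|=|u|\ge c$, since $u=u_0u'$ and $u_0=e^{ih_{\ex}\phi_0}$.

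I would plug $(u,A)=(u_0u',h_{\ex}A_0+A')$ into \eqref{Energy-Splitting} and use the hypothesis to get
$$
F_\ve(u',A')+\frac12\int_{\R^3\setminus\Omega}|\curl A'|^2\le h_{\ex}\int_\Omega\mu(u',A')\wedge B_0-R_0,
$$
with $R_0=\frac{h_{\ex}^2}2\int_\Omega(|u|^2-1)|\curl B_0|^2$. The two terms on the right are then estimated separately. For the cross term, Proposition \ref{prop:vort} applies to $(u',A')$ because $u'$ is continuous and $|u'|\ge c$, and since $B_0\in C_T^{2,\beta}(\Omega,\R^3)$ with norm bounded independently of $\ve$ (Remark \ref{remark:B_0}), it yields $\bigl|\int_\Omega\mu(u',A')\wedge B_0\bigr|\le C\ve F_\ve(u',A')$. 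For $R_0$, I would use the Cauchy–Schwarz bound already recorded in Proposition \ref{prop:energysplitting}, namely $|R_0|\le C\ve h_{\ex}^2E_\ve(|u|)^{1/2}$, together with $E_\ve(|u|)=E_\ve(|u'|)\le F_\ve(u',A')$, to obtain $|R_0|\le C\ve h_{\ex}^2F_\ve(u',A')^{1/2}$.

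Inserting both estimates and using $h_{\ex}\le\ve^{-\alpha}$ gives
$$
F_\ve(u',A')+\frac12\int_{\R^3\setminus\Omega}|\curl A'|^2\le C\ve^{1-\alpha}F_\ve(u',A')+C\ve^{1-2\alpha}F_\ve(u',A')^{1/2}.
$$
Since $\alpha<1$, for $\ve$ small enough the factor $C\ve^{1-\alpha}$ is $\le\frac12$, so the first term on the right can be absorbed into the left‑hand side; this leaves $F_\ve(u',A')^{1/2}\le C\ve^{1-2\alpha}$, hence $F_\ve(u',A')\le C\ve^{2-4\alpha}$, and feeding this back into the previous display also bounds $\frac12\int_{\R^3\setminus\Omega}|\curl A'|^2$ by $C\ve^{2-4\alpha}$. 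Because $\alpha<\frac14$ we have $2-4\alpha>1$, so setting $\de\colonequals1-4\alpha\in(0,1)$ yields the claim.

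The argument is a short bootstrap, so I do not expect a genuine obstacle; the two points that need care are, first, that the vorticity pairing really only contributes $O(\ve F_\ve(u',A'))$ — this is exactly the content of Proposition \ref{prop:vort}, and it relies essentially on $|u'|\ge c$, which lets one write $u'=|u'|e^{i\phi'}$ globally and kill the gradient part through $\int_\Omega\nabla\phi'\cdot\curl\varphi=0$ — and, second, that the exponent $2-4\alpha$ produced by the bootstrap exceeds $1$ precisely when $\alpha<\frac14$. If $\alpha\ge\frac14$ the bootstrap still closes but only delivers $F_\ve(u',A')\le C\ve^{2-4\alpha}$ with $2-4\alpha\le1$, which is insufficient; this is why the strategy is confined to $\alpha<\frac14$, consistently with the remark following Theorem \ref{theorem:uniqueness}.
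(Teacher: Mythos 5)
Your proof is correct and follows essentially the same route as the paper: the energy splitting of Proposition \ref{prop:energysplitting} combined with $GL_\ve(u,A)\leq h_{\ex}^2J(A_0)$, Proposition \ref{prop:vort} for the vorticity pairing against $B_0$, and the Cauchy--Schwarz bound $|R_0|\leq C\ve h_{\ex}^2F_\ve(u',A')^{1/2}$. The only (harmless) difference is in closing the resulting inequality: you absorb the term $C\ve^{1-\alpha}F_\ve(u',A')$ into the left-hand side and divide by $F_\ve(u',A')^{1/2}$ to get $F_\ve(u',A')\leq C\ve^{2-4\alpha}$ in one step with the explicit $\de=1-4\alpha$, whereas the paper starts from the a priori bound $F_\ve(u',A')\leq C\ve^{-2\alpha}$ and bootstraps finitely many times toward the same exponent.
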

\begin{proof}
Let us first observe that, since $GL_\ve(u,A)\leq GL_\ve(u_0,h_{\ex}A_0)= h_{\ex}^2J(A_0)$, we have
\begin{equation}\label{first}
F_\ve(u',A')\leq Ch_{\ex}^2 \leq C \ve^{-2\alpha}
\end{equation}
for some constant $C>0$. We will now use Proposition \eqref{prop:energysplitting} to improve this estimate. By combining \eqref{Energy-Splitting} with the $GL_\ve(u,A)\leq GL_\ve(u_0,h_{\ex}A_0)$, we find
$$
F_\ve(u',A')+\frac12 \int_{\R^3\setminus\Omega} |\curl A'|^2\leq h_{\ex} \int_\Omega \mu(u',A')\wedge B_0+C\ve h_{\ex}^2E_\ve(|u'|)^\frac12.
$$
From Proposition \ref{prop:vort} and $E_\ve(|u'|)\leq F_\ve(u',A')$, we deduce that
\begin{equation}\label{BoundFree}
F_\ve(u',A')+\frac12 \int_{\R^3\setminus\Omega} |\curl A'|^2\leq C\ve h_{\ex}F_\ve(u',A')+C\ve h_{\ex}^2F_\ve(u',A')^\frac12\leq C\ve h_{\ex}^2F_\ve(u',A')^\frac12.
\end{equation}
Inserting \eqref{first} and $h_{\ex}\leq \ve^{-\alpha}$ into \eqref{BoundFree}, we get 
$$
F_\ve(u',A')+\frac12 \int_{\R^3\setminus\Omega} |\curl A'|^2\leq C\ve^{1-3\alpha}\leq C\ve^{(1-2\alpha)\left(1+\frac12\right)}.
$$
By inserting this estimate into \eqref{BoundFree}, we obtain
$$
F_\ve(u',A')+\frac12 \int_{\R^3\setminus\Omega} |\curl A'|^2\leq C\ve^{(1-2\alpha)\left(1+\frac12+\frac14\right)}.
$$
Repeating this process a finite number of times, we are led to
$$
F_\ve(u',A')+\frac12 \int_{\R^3\setminus\Omega} |\curl A'|^2\leq C\ve^{(1-2\alpha)\left(1+\frac12+\frac14+\cdots+\frac1{2k}\right)}
$$
for some $k\in \N$. Since $\alpha<\frac14$ and $\sum_{i=0}^\infty \frac1{2^i}=2$, we deduce that 
$$
(1-2\alpha)\sum_{i=0}^k \frac1{2^i}>1
$$ 
for any $k$ sufficiently large. Hence
$$
F_\ve(u',A')+\frac12 \int_{\R^3\setminus\Omega} |\curl A'|^2\leq C\ve^{1+\de}
$$
for some $\de>0$.
\end{proof}
As a consequence, from Theorem \ref{theorem:uniqueness}, we obtain the uniqueness of the Meissner-type solution of Theorem \ref{theorem:meissner} for $\alpha<\frac14$.

\section*{Acknowledgments}
I am very grateful to my former Ph.D. advisors Etienne Sandier and Sylvia Serfaty for suggesting the problem and for useful comments.
Most of this work was done while I was a Ph.D. student at the Jacques-Louis Lions Laboratory of the Pierre and Marie Curie University, supported by a public grant overseen by the French National Research Agency (ANR) as part of the ``Investissements d'Avenir'' program (reference: ANR-10-LABX-0098, LabEx SMP). Part of this work was supported by the German Science Foundation DFG in the context of the Emmy Noether junior research group BE 5922/1-1. 

\bibliography{referencesFirstCriticalField}
\end{document}